\newcommand{\N}{\mathbb{N}}
\newcommand{\Z}{\mathbb{Z}}
\newcommand{\R}{\mathbb{R}}
\newcommand{\C}{\mathbb{C}}
\newcommand{\T}{\mathbb{T}}
\newcommand{\bP}{\mathbb{P}}
\newcommand{\Lim}{\displaystyle\lim}
\newcommand{\Sum}{\displaystyle\sum}
\newcommand{\Int}{\displaystyle\int}
\newcommand\norm[1]{\left\lVert#1\right\rVert}
\newcommand{\setword}[2]{%
  \phantomsection
  #1\def\@currentlabel{\unexpanded{#1}}\label{#2}%
}
\def\dim{\operatorname{dim}}
\def\leb{{\operatorname{Leb}}}
\def\max{\operatorname{max}}
\def\cV{\mathcal{V}}
\def\cV{\mathcal{V}}
\def\quand{\quad\text{and}\quad}
\def\Gr{Gr}
\newtheorem{theorem}{Theorem}
\newtheorem{corollary}[theorem]{Corollary}
\newtheorem{proposition}[theorem]{Proposition}
\newtheorem{lemma}[theorem]{Lemma}
\def\NN{{\mathbb N}}
\def\PP{{\mathbb P}}
\def\TT{{\mathbb T}}
\def\RR{{\mathbb R}}
\def\SS{{\mathbb S}}
\def\cV{\mathcal{V}}
\def\GL{\operatorname{GL}}   
\def\Leb{\operatorname{Leb}} 
\def\dim{\operatorname{dim}} 
\def\ld{\lambda}
\begin{document}
 
\title{Analyticity of the Lyapunov exponents of random products of quasi-periodic cocycles}
\author{Jamerson Bezerra}
\address{Universidade de Lisboa, Portugal}
\email{}
\date{}

\author{Adriana Sánchez}
\address{Centro de investigación de Matemática Pura y Aplicada, Universidad de Costa Rica. San José, Costa Rica.}
\email{adriana.sanchez\_c@ucr.ac.cr}

\author{El Hadji Yaya Tall}
\address{Instituto de Matemática e Estatística, USP, São Paulo, Brazil.}
\email{eljitall@ime.usp.br}

\keywords{Skew product, Quasi-periodic cocycles, Random Product, Lyapunov exponents}
\subjclass[2020]{Primary: 37H15, 37A20; Secondary: 37D25}


\begin{abstract}
We show that the top Lyapunov exponent $\lambda_+(p)$ , $p = (p_1, \hdots, p_N)$ with $p_i >0$ for each $i$, associated with a random product of quasi-periodic cocycles depends real analytically on the transition probabilities $p$ whenever $\lambda_+(p)$ is simple. Moreover if the spectrum at $p$ is simple (all Lyapunov exponents having multiplicity one ) then all Lyapunov exponents depend real analytically on $p$.
\end{abstract}

\maketitle

\section{Introduction}\label{s:intro}

Let $G$ be a locally compact group and $p$ a probability measure on $G$. The theory of Lyapunov exponents studies in some sense the asymptotic behavior of a random walk on $G$, whose increments are taken to be independent with distribution $p$. This theory is well developed  when $G$ is the general linear group $\GL(V)$ on some finite dimensional vector space $V$, and sometimes is also called product of independent identically distributed (i.i.d.)  random matrices, see \cite{Bel96, BeQ16} for more details. The Lyapunov exponents in this case correspond to the exponential growth rate of the norm of the random product of the matrices. \\

One of the most important questions that comes up when studying Lyapunov exponents of linear cocycles is the \emph{regularity problem}. By this we mean the problem of understanding the behavior of the Lyapunov exponents regarding the underlying data. It is a classical result of Perés \cite{Pe91} that, for i.i.d. sequence of matrices with discrete distribution, the Lyapunov exponents are locally analytic if mild simplicity conditions are assumed.\\

The problem, of understanding how the Lyapunov exponents depend on the probability distribution is originated from the work of Furstenberg \cite{Fur63}, and has been addressed by many authors. Furstenberg, Kifer \cite{FK83} and Hennion \cite{Hen84} proved continuity of the largest lyapunov exponent at every probability distribution $p$ by assumming some form of irreducibility. Recently, Bocker and Viana \cite{BV17}, proved that continuity actually holds in all generality, for compactly supported probability distributions in $\GL(2)$. This last result was generalized by Malheiro, Viana \cite{MV15} for the Markov case, and by Backes, Brown, Butler \cite{BBB}, for a very broad setting of linear cocycles with invariant holonomies. Moreover, Avila, Eskin, Viana \cite{AEV} have announced that continuity extends to $\GL(d)$ for every dimension $d \geq 2$. For non-compactly supported probability measures, Sánchez, Viana \cite{SaV20}  proved that the Lyapunov exponents are semi-continuous with respect to the Wasserstein topology, but not with respect to the weak* topology at least in dimension 2.\\

Regarding the higher regularity problem, Le Page~\cite{LP89} proved that, in any dimension, the largest Lyapunov exponent is H\"older continuous on every compact set of probability distribution satisfying strong irreducibility and the contraction property. These assumptions can not be removed as shown in~\cite{DKS} and in~\cite{TVi}. Tall, Viana~\cite{TVi}, have proven that the Lyapunov exponents are actually H\"older continuous at every probability distribution with compact support in $\GL(2)$ whose Lyapunov spectrum is simple. And they are $\log$-H\"older at every point. About the same time, and with different approach (based on large deviations and the so-called avalanche principle) Duarte, Klein, Santos~\cite{DKS} proved that irreducibility suffices for local \emph{weak-H\"older} continuity at least in dimension 2.\\

Another important type of cocycles is the \textit{quasi-periodic cocycles}, which are measurable maps from the $m$-dimensional torus $\mathbb{T}^m$ to the space of $d\times d$ real matrices, over an ergodic torus translation $T:\mathbb{T}^m\to \mathbb{T}^m$. They are important in mathematical physics,  especially in the study of Schr\"odinger operators. It is well-known that good regularity behavior of the Lyapunov exponent can give information about the properties of the associated Schr\"odinger operator (see for example \cite{Dam_s,Dam07}). However, it is extremely hard to obtain regularity properties of the Lyapunov exponents in this context as one can see from \cite{AvJS14,DuK}. This led to the introduction of a mixed model of random product of quasi-periodic cocycles called the random-quasiperiodic cocycles (see for example \cite{CDK-Mixed}), that allow us to transfer good regularity properties (e.g. continuity, H\"older regularity...) that comes from the random realm to the quasi-periodic world.\\

With that in mind we work with the space of quasi-periodic cocycles $\mathcal{G}$ that can be equipped with a natural group structure. Consider the \emph{group of quasi-periodic cocycles} which is defined as the set of pairs $(\theta, A)$, where $\theta\in \T^m$ and $A$ is a continuous map from $\T^m$ to the space of $d$-dimensional invertible matrices, with the group operation defined by
\[
(\theta_2,A_2)\cdot (\theta_1, A_1) := (\theta_2+\theta_1, A_2(\theta_1 + \cdot) A_1(\cdot)).
\]
Denote it by $\mathcal{G}_{m,d}(\R)$ the group of quasi-periodic cocycles defined above. \\

Let $\{(\omega_n, L_n)\}_{n\in\N}$ be an i.i.d. sequence of random quasi-periodic cocycles on $\mathcal{G}_{m,d}(\R)$ with common distribution $p$,  which is a discrete measure on $\{(\theta_1,A_1),\hdots,(\theta_N,A_N)\}$, with weights $p_1,\dots, p_N$. For each $k\in \N$ we denote by
\[
(\omega^k, L^k) := (\omega_k, L_k)\cdot \hdots \cdot(\omega_1, L_1)
\]
the \emph{random product of the quasi-periodic cocycles} determined by $\{(\omega_n, L_n)\}_{n\in\N}$. The (top) \emph{Lyapunov exponent} associated with the probability vector $p\in \R^N$ is the limit 
\[
\lambda_+(p) := \Lim_{n\to\infty}\frac{1}{n}\log\norm{L^n(t)}.
\]
Which exists by the sub-additive ergodic theorem. \\

Random product of quasi-periodic cocycles has been studied by many authors. Kifer \cite{Kibook86} showed continuity of the Lyapunov exponents under irreducibility assumption. Poletti, Viana \cite{PoV19}  gave some criteria for the Lyapunov spectrum to be simple. Bezerra, Poletti \cite{BeP2019} proved that the set of random product of $k+1$ $C^r$, $0 \leq r \leq \infty$ (or analytic) quasi-periodic cocycles for which the associated largest Lyapunov exponent of the random product is positive, contains a $C^0$ open and $C^r$ dense subset which is formed by $C^0$ continuity point of the Lyapunov exponent. Ao, Duarte, Klein \cite{DKao} have announced that the largest Lyapunov exponents is H\"older continuous with respect to both probability distribution $\nu$ and the quasi-periodic cocycles in any dimension. In this work we extend Perés conclusions in \cite{Pe91} to include this class of cocycles, as established by our main theorem.\\

\begin{theorem}\label{thm:Main}
Let $p = (p_1,\dots,p_N)\in \R^N$ be a probability vector such that $p_i>0$ for every $i=1,\dots, N$. If $\lambda_+(p)$ is simple, then the function which associates each probability vector $q$ the (top) Lyapunov exponent $\lambda_+(q)$ can be extended to an analytic function in a neighborhood of $p$.
\end{theorem}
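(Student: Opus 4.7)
The idea is to express $\lambda_+(q)$ as an integral against a $q$-stationary measure on the projective bundle $X := \T^m \times \bP(\R^d)$ and then to apply analytic perturbation theory to a transfer operator with a spectral gap. Concretely, for a probability vector $q = (q_1,\ldots,q_N)$ in a complex neighborhood of $p$, define the Markov operator
\[
(\cL_q \varphi)(\theta, [v]) \;=\; \sum_{i=1}^N q_i \, \varphi\bigl(\theta + \theta_i,\, [A_i(\theta) v]\bigr), \qquad \varphi\in C(X).
\]
Its dual sends a probability measure $\mu$ to $\sum_i q_i (T_i)_*\mu$, where $T_i(\theta,[v]) := (\theta+\theta_i,\, [A_i(\theta)v])$; by definition a $q$-stationary measure $\mu_q$ is a fixed point of this dual. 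The Furstenberg formula then yields
\[
\lambda_+(q) \;=\; \Int_X \sum_{i=1}^N q_i \log\frac{\norm{A_i(\theta) v}}{\norm{v}} \, d\mu_q(\theta,[v]),
\]
and since the integrand depends polynomially on $q$, the real analyticity of $\lambda_+$ reduces to real analyticity of the measure-valued map $q \mapsto \mu_q$.

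The second step is to regard $\cL_q$ as a bounded operator on a suitable Banach algebra $\cB = C^\alpha(X)$ of $\alpha$-Hölder continuous functions, with $\alpha>0$ small. The map $q \mapsto \cL_q$ is then linear (hence holomorphic) in $q$, and the constant function $\mathbf{1}$ always belongs to the kernel of $\cL_q - I$. The key technical step is to prove that, under the assumption that $\lambda_+(p)$ is simple, the operator $\cL_p$ admits a \emph{spectral gap} on $\cB$: the eigenvalue $1$ is simple and isolated, the rest of the spectrum is contained in a disk of radius $r<1$, and the spectral projection onto the $1$-eigenspace is given by $\varphi \mapsto \bigl(\int \varphi \, d\mu_p\bigr)\mathbf{1}$. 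Once this is established, Kato's analytic perturbation theory produces an analytic family of spectral projectors $\Pi_q$ associated to the leading eigenvalue of $\cL_q$ for $q$ in a complex neighborhood of $p$; the dual family of stationary measures $\mu_q$ is thus analytic in $q$, and substituting this back into the Furstenberg formula gives the desired analyticity of $\lambda_+(q)$.

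The main obstacle is establishing the spectral gap. In the pure i.i.d.\ case this is Le Page's classical theorem, which relies on strong irreducibility together with the contraction property; under Furstenberg's criterion these conditions follow from simplicity of the top Lyapunov exponent. In the present random quasi-periodic setting the torus translation $\theta \mapsto \theta + \theta_i$ is isometric, so the contraction required to build the gap must come entirely from the random factors acting on the projective fibres. I would adapt the Bougerol--Lacroix / Guivarc'h strategy to this skew-product situation: first deduce uniqueness and continuity of $\mu_p$ from the simplicity of $\lambda_+(p)$, in the spirit of Furstenberg--Kifer--Hennion (as in Kifer's work cited above); then upgrade this to exponential contraction of $\cL_p^n(\varphi - \int \varphi \, d\mu_p)$ in Hölder norm, by combining Furstenberg positivity type estimates with the large deviation bounds for the cocycle $L^n$ available in this mixed setting thanks to Duarte, Klein and collaborators (cf.\ the references in the introduction). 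With the spectral gap in hand, the analytic perturbation argument is standard and Theorem~\ref{thm:Main} follows.
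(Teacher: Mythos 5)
Your high-level strategy — write $\lambda_+(q)$ as an integral against a stationary measure on $\T^m\times\bP^{d-1}$, observe that the transfer operator depends linearly on $q$, and then push analyticity through — is in the same spirit as the paper, and indeed the operator you call $\cL_q$ is literally the operator $T_z$ the authors use. But there are two genuine gaps in the proposal.

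\textbf{The reducible case is not addressed, and the claimed shortcut fails.} You assert that the required strong irreducibility and contraction can be obtained because ``under Furstenberg's criterion these conditions follow from simplicity of the top Lyapunov exponent.'' That implication is false: a reducible cocycle (e.g.\ an upper-triangular one with distinct diagonal growth rates) has simple top exponent but admits an invariant section, hence many stationary measures, and the Furstenberg formula $\lambda_+(q)=\int\Phi_q\,d\mu_q$ then only yields $\lambda_+$ after a $\max$ over stationary measures, which destroys the analytic-perturbation argument outright. Moreover, your plan to ``first deduce uniqueness and continuity of $\mu_p$ from simplicity of $\lambda_+(p)$'' is circular for the same reason. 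The theorem as stated assumes only that $\lambda_+(p)$ is simple, so the reducible case must be dealt with. The paper does so by an explicit finite induction: given a $(A,p)$-invariant section $\cV$, Kifer's reduction $\lambda_+(A,p)=\max\{\lambda_+(A^{\cV},p),\lambda_+(A^{\R^d/\cV},p)\}$ together with simplicity forces a strict inequality, which (by continuity of the top exponent) persists for $q$ near $p$, so $\lambda_+(A,q)$ equals the exponent of a lower-dimensional cocycle; one then iterates until an irreducible pair is reached. That reduction is essential and has no counterpart in your write-up.

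\textbf{The contraction estimate is the actual content, and you leave it as a sketch.} You correctly identify the spectral gap as ``the main obstacle,'' but only offer a plan (``I would adapt the Bougerol--Lacroix / Guivarc'h strategy\ldots combining Furstenberg positivity type estimates with the large deviation bounds\ldots''). The paper replaces the full Le Page spectral-gap machinery by a more elementary, direct argument. The key estimate is Proposition 4.2: under irreducibility and simplicity, the quantity
\[
K_n(\alpha,p)=\sup_{u\neq u'}\Int_{\Sigma\times\T^m}\left(\frac{d(A^n_x(t)u,A^n_x(t)u')}{d(u,u')}\right)^{\!\alpha}d\mu_p(x,t)
\]
decays like $C_0e^{-\zeta n}$ for small $\alpha$. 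The input to this is a uniform-in-$v$ version of the mean growth $\frac1n\int\log\|A^n_x(t)v\|\,d\mu_p\to\lambda_+(p)$, which is precisely where the non-random filtration theorem of Kifer and the simplicity of $\lambda_+(p)$ enter; your proposal does not supply a mechanism that produces this uniformity. With $K_n$ in hand the paper then shows that, for $z$ in a complex domain $D_\gamma$ with $e^{-\zeta}<\gamma$, the holomorphic polynomials $z\mapsto\int_{\T^m}T_z^n\varphi(t,v)\,dt$ form a uniformly Cauchy sequence, and an additional Cesàro-sum argument identifies the limit with $\lambda_+(q)$ for real $q$. This bypasses Kato perturbation theory and any explicit choice of Hölder exponent for a Banach algebra with a spectral gap; it only needs the decay of $K_n$. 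So while your architecture would ultimately work if the spectral gap were established, the actual hard analytic lemma is exactly what is missing, and the paper's route to it (and past it) is leaner than what you outline.
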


Furthermore, this result can be extended for all Lyapunov exponents, assuming simplicity of the spectrum. \\

\begin{corollary}\label{310821.8}
If additionally we assume that the Lyapunov spectrum of $p$ is simple, then for every $k\geq 1$, the function which associates each probability vector $q$ the $k$-th Lyapunov exponents $\lambda_k(p)$ can be extended to a holomorphic function.
\end{corollary}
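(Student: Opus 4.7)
The plan is to deduce the corollary from Theorem~\ref{thm:Main} by a standard exterior-power trick. For each $k\in\{1,\dots,d\}$, consider the $k$-th exterior power cocycle: given a quasi-periodic cocycle $(\theta,A)\in\mathcal{G}_{m,d}(\R)$, set $\Lambda^k(\theta,A):=(\theta,\Lambda^k A)$, where $\Lambda^k A:\T^m\to\GL(\Lambda^k\R^d)$ is defined pointwise. One checks immediately from the definition of the group operation on $\mathcal{G}_{m,d}(\R)$ that $\Lambda^k$ is a group homomorphism from $\mathcal{G}_{m,d}(\R)$ into $\mathcal{G}_{m,D}(\R)$ with $D=\binom{d}{k}$. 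In particular, if $\{(\omega_n,L_n)\}_{n\in\N}$ is an i.i.d.\ sequence with distribution $p$ supported on $\{(\theta_1,A_1),\dots,(\theta_N,A_N)\}$, then $\{(\omega_n,\Lambda^k L_n)\}_{n\in\N}$ is an i.i.d.\ sequence of quasi-periodic cocycles with the same discrete distribution $p$, now supported on $\{\Lambda^k(\theta_i,A_i)\}_{i=1}^N$, and its random product is exactly $\Lambda^k$ applied to the original random product.

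Next I would use the classical identity relating Lyapunov exponents of exterior powers to those of the base cocycle: the Lyapunov exponents of $\Lambda^k(\omega^n,L^n)$ are the sums $\lambda_{i_1}(p)+\cdots+\lambda_{i_k}(p)$ over all $k$-subsets $\{i_1<\cdots<i_k\}$ of $\{1,\dots,d\}$. Under the assumption that the full Lyapunov spectrum of $p$ is simple, the largest such sum is $\lambda_1(p)+\cdots+\lambda_k(p)$, and the next largest is $\lambda_1(p)+\cdots+\lambda_{k-1}(p)+\lambda_{k+1}(p)$, which is strictly smaller. Hence the top Lyapunov exponent of the exterior-power cocycle $\Lambda^k L$ at $p$ is simple.

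Applying Theorem~\ref{thm:Main} to each exterior-power cocycle $\Lambda^k$ (with the same probability vector $p$, which still has strictly positive entries), I conclude that the map
\[
q\longmapsto \Lambda_k(q):=\lambda_1(q)+\cdots+\lambda_k(q)
\]
extends to a real analytic (hence holomorphic) function on a common neighborhood of $p$ in the space of probability vectors, for every $k=1,\dots,d$. Since $\lambda_k(q)=\Lambda_k(q)-\Lambda_{k-1}(q)$ (with $\Lambda_0\equiv 0$), each individual Lyapunov exponent $\lambda_k$ extends to a holomorphic function in a neighborhood of $p$, proving the corollary.

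The main potential obstacle is the verification that the exterior-power construction does not break any hypothesis of Theorem~\ref{thm:Main}: one must check that $\Lambda^k(\theta_i,A_i)$ still lies in $\mathcal{G}_{m,D}(\R)$ (continuity and invertibility are preserved by $\Lambda^k$), that the translation in the base is unchanged so the quasi-periodic structure is intact, and that simplicity of the top exponent of $\Lambda^k$ really follows from simplicity of the spectrum of the original cocycle. All of these are standard, but the last one requires the strict inequalities between the Lyapunov exponents of the base cocycle, which is precisely what the hypothesis of simplicity of the spectrum provides.
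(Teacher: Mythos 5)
Your proposal is correct and takes essentially the same route as the paper: pass to the $k$-th exterior power cocycle, observe that simplicity of the full spectrum implies simplicity of the top exponent of $\wedge_k A$ (since the two largest sums of $k$ distinct exponents are $\lambda_1+\cdots+\lambda_k$ and $\lambda_1+\cdots+\lambda_{k-1}+\lambda_{k+1}$), apply Theorem~\ref{thm:Main} to get holomorphic extensions of each $\lambda_1+\cdots+\lambda_k$, and recover $\lambda_k$ as a difference. The only cosmetic difference is that you spell out the group-homomorphism property of $\Lambda^k$ and the strict-inequality argument in slightly more detail than the paper does.
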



\section{Notations and precise statements}\label{s:notations}

Let $\Sigma = \{1,\dots,N\}^{\N}$ be the set of unilateral infinite sequences of elements in $\{1,\dots,N\}$ and consider $\sigma:\Sigma \rightarrow \Sigma$ be the shift map on $\Sigma$. For each $i = 1,\dots, N$, let $f_i: \T^m\rightarrow \T^m$ denotes the translation map by a vector $\theta_i\in \T^m$.\footnote{Throughout this work we use the notation $\T^m = \R^m/\Z^m$.}\\

The \emph{base dynamics} is defined by the skew product $f:\Sigma\times\T^m\rightarrow \Sigma\times\T^m$ which is given by
\[
f(x,t) = (\sigma(x), f_{x_0}(t)),
\]
where $x = (x_n)_{n\geq 0}$.\\

For each $A = (A_1,\dots,A_N) \in C^0(\T^m;\GL_d(\R))\times\cdots\times C^0(\T^m;\GL_d(\R))$, the \emph{linear cocycle} over the base dynamics $f$ with fiber action given by $A$ is defined as the the map $F_A:\Sigma\times\T^m\times \R^d\rightarrow \Sigma\times\T^m\times\R^d$ given by
\[
F(x,t,v) = (\sigma(x), f_{x_0}(t), A_{x_0}(t)\cdot v) = (f(x,t), A_{x_0}(t)\cdot v).
\]
This map induces an action on $\Sigma\times\T^m\times\bP^{d-1}$, represented in this work by the same notation $F_A$, given by the natural projectivization of the fiber action.\\

As usual, the forward iterates of the base dynamics and the linear cocycle are denoted by the following expressions
\[
f^n(x,t) = (\sigma^n(x), f^n_x(t))\quand F^n(x,t,v) = (f^n(x,t), A^n_x(t)\cdot v).
\]
where, for $x = (x_n)_{n\geq 0}$,
\[
f^n_x(t) = f_{x_{n-1}}\circ\cdots\circ f_{x_0}(t) \quand A^n_x(t) = A_{x_{n-1}}(f_{x_{n-1}}(t))\cdots A_{x_0}(t).
\]

Denote by $\leb$ the Lebesgue measure on $\T^m$ which we identify with the Lebesgue measure on its fundamental domain $[0,1]^m$.\\

Given a probability vector $p = (p_1,\dots,p_N)\in \R^N$ we define the Bernoulli measure
\[
p = p_1\delta_1+\dots+p_N\delta_N
\]
on $\{1,\dots,N\}$.
Let $p^{\N}$ be the product probability measure on $\Sigma$ and define $\mu_p = p^{\N}\times \leb$ the product measure on $\Sigma\times \T^m$.\\

Observe that the base dynamics $f$ preserves the measure $\mu_p$. We assume throughout this work that $\theta_j$ is rationally independent for some $j=1,\dots,N$ and hence we have the ergodicity of the base system $(f,\mu_p)$. In this context, Oseledets' theorem (non invertible case) states that there exist $\kappa\in \N$ and real numbers $\lambda_1(p)>\dots>\lambda_{\kappa}(p)$ such that for $\mu_p$-a.e. $(x,t) \in \Sigma\times \T^m$ there exists a filtration of linear subspaces, called \emph{Oseledets filtration},
\[
\{0\} = E^{\kappa+1}_x(t) \subsetneq E^{\kappa}_x(t) \subsetneq \cdots \subsetneq E^2_x(t) \subsetneq E^1_x(t) = \R^d,
\]
depending measurably on $(x,t)$. Moreover, for each $i=1,\hdots,\kappa$
\[
A_x(t)(E^i_x(t)) = E^i_{\sigma(x)}(f_{x_0}(t)),
\]
and
\[
\lambda_i(A,p) = \Lim_{n\to\infty}\frac{1}{n}\log\norm{A^n_x(t)\cdot v},
\]
for every $v\in E^i_x(t)\backslash E^{i+1}_x(t)$. The numbers $\lambda_1(A,p),\dots,\lambda_{\kappa}(A,p)$ are called \emph{Lyapunov exponents}, the term \emph{Lyapunov spectrum} is used to refer to the set formed by the Lyapunov exponents. In addition, Oseledets theorem also states that, for each $i=1,\dots, \kappa$, the dimension of the linear subspace $E^i_x(t)$ does not depend on $(x,t)$ in a full measure set. The \emph{multiplicity} of a Lyapunov exponent $\lambda_i(p)$ is defined by the quantity $\dim(E^i_x(t)) - \dim(E^{i+1}_x(t))$. We say that a Lyapunov exponent $\lambda_i(A,p)$ is \emph{simple} if its multiplicity is $1$ and, in the same line, we say that the Lyapunov spectrum is \emph{simple} if each Lyapunov exponent is simple.\\

Using exterior algebra we can give an alternative characterization for the Lyapunov exponents as follows. For each $k=1,\dots, d$ and for each $i=1,\dots, N$, consider the fiber action $\wedge_k A_i:\T^m\rightarrow \GL_{\binom{d}{k}}(\R)$ given by
\[
\wedge_k A_i(t)\cdot(v_1\wedge\dots\wedge v_k) = A_i(t)\cdot v_1\wedge\dots\wedge A_i(t)\cdot v_k.
\]

Write $\wedge_k A = (\wedge A_1,\dots, \wedge A_N)$. Using the same Bernoulli measure $p$ as above and Kingman's sub-additive theorem we can guarantee that for each $k=1,\dots,d$ the limit
\[
\lambda_+(\wedge_k A, p) = \Lim_{n\to\infty}\frac{1}{n}\log\norm{(\wedge_k A)_x(t)},
\]
exists for $p^{\N}$-a.e. $x\in \Sigma$ and $\leb$-a.e. $t\in \T^m$. We also have the following formula
\[
\lambda_+(\wedge_k A, p) = \lambda_1(A,p)+\dots+\lambda_k(A,p),
\]
where each Lyapunov exponent appear in the right hand side repeatedly according with its multiplicity.\\

This gives us the aimed characterization of the Lyapunov exponents associated with the probability vector $p$ and the fiber action $A$:
\[
\lambda_k(A, p) = \lambda_+(\wedge_k A, p) - \lambda_+(\wedge_{k-1}A,p).
\]

Observe that, we also obtain 
\[
\lambda_+(A,p) = \lambda_+(\wedge_1A,p) = \lambda_1(A,p).
\]

In most part of this work we fix the fiber action
\[
A = (A_1,\dots, A_N) \in C^0(\T^m;\GL_d(\R))\times\dots\times C^0(\T^m; \GL_d(\R)),
\]
and are interested in the regularity behaviour of the map that associates each probability vector $p$ the Lyapunov exponents $\lambda_i(A,p)$. For this reason, unless it is strictly necessary, we avoid to write the dependence on the fiber action $A$ and only write $\lambda_i(p)$ for the Lyapunov exponent.\\

\section{Invariant sections}\label{sec:inv}

Let $A = (A_1,\dots,A_N) \in C^0(\T^m;\GL_d(\R))\times\cdots\times C^0(\T^m;\GL_d(\R))$ and consider the linear cocycle $F_A:\Sigma\times\T^m\times\R^d\rightarrow \Sigma\times\T^m\times\R^d$ as in section \ref{s:notations}.\\

We say that a measurable section $\cV:\T^m\rightarrow \Gr(k,d)$, where $\Gr(k,d)$ denotes the Grassmanian of the $k$-dimensional subspaces inside of $\R^d$, is a $(A,p)$-\emph{invariant section} if
\[
    A_{x}(t)\cV(t) = \cV(f_{x}(t)),
\]
for $\mu_p$-a.e. $(x,t) \in \Sigma\times\T^m$. In other terms, we have that
\[
A_j(t)\cV(t) = \cV(t+\theta_j),
\]
for  $j=1, \hdots, N$, and $\leb$-a.e. $t \in \T^m$.\\

Given an $(A,p)$-invariant section $\cV:\T^m\rightarrow \Gr(k,d)$, there exist two canonical linear cocycles that can be constructed using $\cV$. For the first construction observe that there exist measurable vector fields $\xi_1, \hdots, \xi_k$ such that for $\Leb$-almost every $t$,  $\mathfrak{B}(t) = \{\xi_1(t), \hdots, \xi_k(t)\}$ is a basis of $\cV(t)$. If $\mathfrak{E}=\{e_1, \hdots, e_k\}$ is the canonical basis of $\RR^k$ let $J_{\cV}(t)$ be the linear isomorphism between $\cV(t)$ and $\RR^k$ that sends the basis $\mathfrak{B}$ to the basis $\mathfrak{E}$. We can define a linear cocycle $F_{\cV}$ in $\Sigma \times \T^m \times \RR^k$  by
\[
    F_{\cV}(x, t, v) = (\sigma(x), f_{x_0}(t), A_{x_0}^{\cV}(t)v),
\]
where $A^{\cV}_i: \T^m\rightarrow \GL_k(\R)$ is given by
\[
A^{\cV}_i(t) := J_{\cV}(t+\theta_i)\circ A_{x_0}(t)\circ (J_{\cV}(t))^{-1},
\]
for every $i=1,\dots, N$ and $A^{\cV} = (A^{\cV}_1,\dots, A^{\cV}_N)$. For the second construction consider the factor space $\T^m \times \RR^d/\cV$ where each two points $(t, u)$ and $(t, v) \in \T^m \times \RR^d$ are identified if $u-v \in \cV(t)$. This way we obtain a vector bundle over the base $\T^m$ with fibers $\RR^d/\cV_t$. Notice that $A_x(t)$ acts on $\RR^d/\cV(t)$ for any $t$, and since $\cV$ is $(A,p)$-invariant we have that
\[
    A_x(t)(\RR^d/\cV(t)) = \RR^d/\cV(f_x(t)),
\]
for $\mu_p$-a.e. $(x,t)\in \Sigma\times\T^m$. As above, we can consider for $t\in \T^m$ a basis of $\R^d/\cV(t)$ depending measurably on $t\in \T^m$ and define a linear cocycle $F_{\R^d/\cV}$ in $\Sigma \times \T^m \times \RR^{d-k}$  by
\[
    F_{\cV}(x, t, v) = (\sigma(x), f_{x_0}(t), A_{x_0}^{\R^d/\cV}(t)v),
\]
where $A_i^{\R^d/\cV}:\T^m\rightarrow \GL_{d-k}(\R)$ is defined using the appropriate change of coordinates and $A^{\R^d/\cV} = (A^{\R^d/\cV}_1,\dots, A^{\R^d/\cV}_N)$.\\

The following Proposition is due to Furstenberg-Kifer \cite{FK60} in the case of random product of matrices, and extended by Kifer \cite{Kibook86} for random product of bundle maps. This is a crucial result since it reduces the computation of the top Lyapunov exponent of a random walk on a group of upper triangular block matrices (in particular random product in the group of reducible quasi periodic cocycles) to the top Lyapunov exponents of the random walk induced on the diagonal parts.\\

\begin{proposition}\label{lem:kifer}
Let $\mathcal{V}$ be an $(A,p)$-invariant measurable section then 
\begin{equation}\label{eq:reducible}
\ld_+(A,p) = \max\{\ld_+(A^{\cV},p), \ld_+(A^{\R^d/\cV}, p)\}    
\end{equation}
\end{proposition}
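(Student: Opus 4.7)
The plan is to reduce $A$ to block upper triangular form using a measurable orthonormal frame adapted to $\mathcal{V}$, and then compare norms of iterates.

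First, I would invoke a measurable selection argument to produce a measurable map $U:\mathbb{T}^m\to O(d)$ such that $U(t)\mathcal{V}(t)=\mathbb{R}^k\times\{0\}$. The conjugated cocycle $\tilde A_i(t):=U(t+\theta_i)A_i(t)U(t)^T$ then has block upper triangular form
\[
\tilde A_i(t) = \begin{pmatrix} A^{\mathcal{V}}_i(t) & B_i(t) \\ 0 & A^{\mathbb{R}^d/\mathcal{V}}_i(t) \end{pmatrix}
\]
by the $(A,p)$-invariance of $\mathcal{V}$, with $B_i:\mathbb{T}^m\to\mathbb{R}^{k\times(d-k)}$ measurable and uniformly bounded (since $U$ is orthogonal and each $A_i$ is continuous on the compact torus). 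Because Lyapunov exponents of the induced cocycles are independent of the choice of measurable frame, this orthonormal reduction does not affect $\lambda_+(A^{\mathcal{V}},p)$ or $\lambda_+(A^{\mathbb{R}^d/\mathcal{V}},p)$; moreover, orthogonality of $U$ gives $\|A^n_x(t)\|=\|\tilde A^n_x(t)\|$.

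The lower bound $\lambda_+(A,p)\geq\max\{\lambda_+(A^{\mathcal{V}},p),\lambda_+(A^{\mathbb{R}^d/\mathcal{V}},p)\}$ is then immediate, since the operator norm of a block upper triangular matrix dominates the norms of its diagonal blocks. For the reverse inequality, iterating the block form yields
\[
\tilde A^n_x(t) = \begin{pmatrix} (A^{\mathcal{V}})^n_x(t) & C^n_x(t) \\ 0 & (A^{\mathbb{R}^d/\mathcal{V}})^n_x(t) \end{pmatrix},
\]
where
\[
C^n_x(t) = \sum_{k=0}^{n-1}(A^{\mathcal{V}})^{n-k-1}_{\sigma^{k+1}(x)}(f^{k+1}_x(t))\,B_{x_k}(f^k_x(t))\,(A^{\mathbb{R}^d/\mathcal{V}})^k_x(t).
\]
Consequently $\|\tilde A^n_x(t)\|\leq\|(A^{\mathcal{V}})^n_x(t)\|+\|C^n_x(t)\|+\|(A^{\mathbb{R}^d/\mathcal{V}})^n_x(t)\|$, and the task reduces to showing $\limsup_n \frac{1}{n}\log\|C^n_x(t)\|\leq M$, where $M=\max\{\lambda_+(A^{\mathcal{V}},p),\lambda_+(A^{\mathbb{R}^d/\mathcal{V}},p)\}$.

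The hard part is controlling this off-diagonal telescoping sum, because it couples two cocycles evaluated along different shifted segments of the same orbit. For any $\epsilon>0$, Kingman's subadditive theorem combined with Egorov's theorem provide a set $\Omega\subset\Sigma\times\mathbb{T}^m$ of measure at least $1-\epsilon$ and an integer $N$ such that $\|(A^{\mathcal{V}})^m_y(s)\|\leq e^{m(\lambda_+(A^{\mathcal{V}},p)+\epsilon)}$ and $\|(A^{\mathbb{R}^d/\mathcal{V}})^m_y(s)\|\leq e^{m(\lambda_+(A^{\mathbb{R}^d/\mathcal{V}},p)+\epsilon)}$ for every $(y,s)\in\Omega$ and every $m\geq N$; outside $\Omega$ one retains the trivial submultiplicative bound $L^m$ with $L=\sup_i\|A_i\|_\infty<\infty$. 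Splitting the sum over $k$ according to whether the shifted orbit point $(\sigma^{k+1}(x),f^{k+1}_x(t))$ lies in $\Omega$ and using the Birkhoff ergodic theorem to show that the density of indices for which it does not is at most $\epsilon$, one obtains $\|C^n_x(t)\|\leq n\cdot e^{n(M+C\epsilon)}$ for $\mu_p$-a.e. $(x,t)$ and all sufficiently large $n$, where $C$ depends only on $L$ and $M$. Taking $\frac{1}{n}\log$, letting $n\to\infty$, and then $\epsilon\downarrow 0$ gives $\limsup_n \frac{1}{n}\log\|C^n_x(t)\|\leq M$, which combined with Kingman applied directly to the diagonal blocks yields $\lambda_+(A,p)\leq M$ and closes the proof.
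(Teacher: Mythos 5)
Your overall plan---conjugate by a measurable orthogonal frame adapted to $\mathcal{V}$ to reach block upper triangular form, and then control the off-diagonal coupling $C^n_x(t)$---is a legitimate route, your iterated block formula is correct, and the lower bound $\lambda_+(A,p)\geq\max\{\lambda_+(A^{\mathcal{V}},p),\lambda_+(A^{\mathbb{R}^d/\mathcal{V}},p)\}$ is immediate as you say. (For reference: the paper gives no proof here; it simply cites Lemma~III.3.3 of Kifer's book.) However, the Egorov-plus-Birkhoff step you use to dominate $\|C^n_x(t)\|$ does not close.

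The difficulty is the location, not the density, of the bad indices. For a ``bad'' $k$, i.e.\ one with $(\sigma^{k+1}(x),f^{k+1}_x(t))\notin\Omega$, your only estimate on the left factor $(A^{\mathcal{V}})^{n-k-1}_{\sigma^{k+1}(x)}(f^{k+1}_x(t))$ is the crude bound $L^{n-k-1}$, so the corresponding term in the sum is of size roughly $L^{n-k-1}\|B\|_\infty e^{k(\lambda_+(A^{\mathbb{R}^d/\mathcal{V}},p)+\epsilon)}$. Birkhoff only controls the asymptotic \emph{density} of bad indices; it does not prevent them from sitting near $k=0$. A single bad index at, say, $k=0$ already contributes a term comparable to $L^{n}$, and $\log L\geq\lambda_+(A,p)\geq M$ (typically strictly), so this one term alone exceeds the claimed bound $n\,e^{n(M+C\epsilon)}$ for small $\epsilon$. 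Worse, whether a given $k$ is bad depends only on $(x,t)$, not on $n$, so once $k=0$ is bad for some $(x,t)$ that term is present for every $n$; the set of such $(x,t)$ has measure $\leq\epsilon$, not measure zero, so this cannot be dismissed in an a.e.\ statement.

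Two standard ways to repair the step. (1) Replace Egorov by the Oseledets tempering lemma: there is a measurable, a.e.\ finite $K_\epsilon(x,t)$ with $K_\epsilon(\sigma^{j}(x),f^{j}_x(t))\leq K_\epsilon(x,t)e^{j\epsilon}$ and $\|(A^{\mathcal{V}})^m_y(s)\|\leq K_\epsilon(y,s)\,e^{m(\lambda_+(A^{\mathcal{V}},p)+\epsilon)}$ for all $m\geq 0$ and a.e.\ $(y,s)$. Substituting at the shifted point turns every summand into $O\!\left(K_\epsilon(x,t)^2 e^{n(M+3\epsilon)}\right)$, and summing $n$ such terms gives $\limsup_n\frac{1}{n}\log\|C^n_x(t)\|\leq M+3\epsilon$. (2) Cleaner still, and avoiding $C^n$ entirely: conjugate $\tilde A_i$ by $S_\tau=\operatorname{diag}(\tau I_k,\,I_{d-k})$ with $\tau>0$. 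This multiplies the off-diagonal block by $\tau$ and, being conjugation by a constant invertible matrix, leaves all Lyapunov exponents unchanged. Since $\lambda_+(\cdot,p)=\inf_n\frac{1}{n}\int\log\|(\cdot)^n_x(t)\|\,d\mu_p$ is an infimum of continuous functionals of the cocycle, it is upper semicontinuous, and letting $\tau\downarrow 0$ yields $\lambda_+(A,p)=\lambda_+(S_\tau\tilde A S_\tau^{-1},p)\leq\lambda_+(\text{block diagonal},p)+o(1)=M+o(1)$, hence $\lambda_+(A,p)\leq M$.
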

\begin{proof}
See \cite[Lemma III.3.3]{Kibook86} for the proof.
\end{proof}

We say that a pair $(A,p)$, where $A:\T^m\rightarrow \GL_d(\R)$ and $p\in \R^n$ is a probability vector, is \emph{irreducible} if it does not exists any $(A,p)$-invariant section otherwise we say that $(A,p)$ is \emph{reducible}.\\

Irreducibility is a central concept in the analysis of the Lyapunov exponents. As it can be seen in the next section it allow us to obtain finner results regarding the convergence that defines the Lyapunov exponents which is the main step in the proof of the Theorem \ref{thm:Main}. The general result will be obtained by a reduction (inductive process) to the irreducible case. We finish this section proving the continuity of the Lyapunov exponent with respect to the probability vector.\\

\begin{proposition}\label{prop:continuity}
Let $p = (p_1, \hdots, p_N)$ be a probability vector with $p_i>0$ for each $i = 1, \hdots, N$. If $p_n$ is a sequence of probability vectors converging to $p$ then 
\[
\ld_+(A,p_n) \to \ld_+(A,p) \text{ as } n \to \infty.
\]
\end{proposition}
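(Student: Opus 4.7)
The plan is to establish upper and lower semi-continuity of $\lambda_+(A,\cdot)$ at $p$ separately. For upper semi-continuity, I would note that by the cocycle identity and the $f$-invariance of $\mu_q$, the sequence $a_n(q) := \int \log\|A^n_x(t)\|\,d\mu_q$ is subadditive in $n$, so Kingman's theorem (or Fekete's lemma) gives $\lambda_+(A,q) = \inf_n \tfrac{1}{n} a_n(q)$. Expanding the integral,
\[
a_n(q) = \sum_{(x_0,\ldots,x_{n-1})} q_{x_0}\cdots q_{x_{n-1}}\int_{\T^m}\log\bigl\|A_{x_{n-1}}(f^{n-1}_x(t))\cdots A_{x_0}(t)\bigr\|\,dt,
\]
a polynomial in $q$, hence continuous. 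Thus $\lambda_+(A, \cdot)$ is an infimum of continuous functions and therefore upper semi-continuous at $p$, giving $\limsup_n \lambda_+(A, p_n) \leq \lambda_+(A, p)$.

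For lower semi-continuity I would induct on the fiber dimension $d$. The case $d=1$ is immediate since $\lambda_+(A,q) = \sum_i q_i\int_{\T^m}\log|A_i(t)|\,dt$ depends linearly on $q$. For $d \geq 2$ the key observation is that the condition $A_j(t)\cV(t) = \cV(t+\theta_j)$ for $j = 1,\ldots,N$ defining an $(A,q)$-invariant section is independent of the weights $q_i$, as long as $q$ has full support $\{1,\ldots,N\}$. Since $p_i > 0$ for every $i$, the same holds for $p_n$ once $n$ is large, so the family of invariant sections, and in particular the irreducibility status of the pair $(A,q)$, is constant in a neighborhood of $p$.

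If $(A,p)$ is irreducible, continuity of $\lambda_+(A,\cdot)$ at $p$ follows from a Furstenberg-Kifer-type argument adapted to the skew-product setting \cite{Kibook86}: the top Lyapunov exponent admits a variational formula as an integral against a stationary probability measure $\eta_q$ on $\Sigma\times\T^m\times\PP^{d-1}$ projecting to $\mu_q$, and by weak-$*$ compactness any accumulation point of $\{\eta_{p_n}\}$ is stationary at $p$, which by irreducibility must coincide with $\eta_p$, yielding $\lambda_+(A,p_n) \to \lambda_+(A,p)$. If instead $(A,p)$ is reducible, pick an $(A,p)$-invariant section $\cV$ with $0 < \dim\cV < d$. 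By the observation above, $\cV$ is also $(A,q)$-invariant for $q$ near $p$, and Proposition \ref{lem:kifer} gives
\[
\lambda_+(A,q) = \max\bigl\{\lambda_+(A^{\cV},q),\,\lambda_+(A^{\R^d/\cV},q)\bigr\}.
\]
Since both $A^{\cV}$ and $A^{\R^d/\cV}$ have strictly smaller fiber dimension, the inductive hypothesis applies to each summand, which is therefore continuous at $p$, and the maximum of two continuous functions is continuous.

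The main obstacle lies in the irreducible base case, where one needs continuity of the Furstenberg stationary measure on $\Sigma\times\T^m\times\PP^{d-1}$ as $q$ varies. Extending the classical iid-matrix arguments to the present skew-product setup requires controlling the interaction between the torus translation in the base and the projective action in the fiber, and uses crucially the positivity of all $p_i$ to preserve irreducibility along the perturbation.
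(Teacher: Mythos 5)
Your argument is correct and follows the paper's strategy: invoke Kifer's continuity theorem (\cite[Theorem IV.2.2]{Kibook86}) in the irreducible case, and in the reducible case pass to an invariant section, apply Proposition~\ref{lem:kifer}, and induct on fiber dimension, using that the defining relation $A_j(t)\cV(t)=\cV(t+\theta_j)$ is independent of the weights so invariant sections persist for all fully supported probability vectors near $p$. The separate upper semi-continuity argument via Fekete's lemma is sound but redundant, since Kifer's result already gives full continuity at the irreducible base of your induction and the max of two continuous functions is continuous, so the inductive step yields continuity directly rather than mere lower semi-continuity.
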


\begin{proof}
The case in which $(A,p)$ is irreducible, was proved in \cite[Theorem IV.2.2]{Kibook86}. So, we assume that $(A,p)$ is reducible and so there exist some $d_1< d$ and a $(A,p)$-invariant section $\cV:\T^m\rightarrow \Gr(d_1,d)$. Note that the fact that $p_i>0$ for every $i=1,\dots, N$ guarantees that $\cV$ is also a $(A,p_n)$-invariant section for every $n$ sufficiently large.\\
    
By Proposition \ref{lem:kifer}, we can write
    \begin{align*}
        \lambda_+(A,p) = \max\{\lambda_+(A^{\cV},p), \lambda_+(A^{\R^d/\cV},p)\}.
    \end{align*}
Therefore, in order to finish the prove of the proposition it is enough to guarantee that
    \begin{align*}
        \Lim_{n\to\infty}\lambda_+(A^{\cV},p_n)= \lambda_+(A^{\cV}, p) \quand \Lim_{n\to\infty}\lambda_+(A^{\R^d/\cV},p_n) = \lambda_+(A^{\R^d/\cV},p).
    \end{align*}
    
We give an argument to the veracity of the first limit (for the second limit the argument is completely analogous). If the pair $(A^{\cV},p)$ is irreducible, then applying again Kifer result (\cite[Theorem IV.2.2]{Kibook86}) we have that
    \[
        \Lim_{n\to\infty}\lambda_+(A^{\cV},p_n)= \lambda_+(A^{\cV}, p).
    \]
If, otherwise, $(A^{\cV},p)$ is reducible, then we can switch the roles of $A$ and $A^{\cV}$ above and repeat the argument to build cocycles taking values in $\GL_{d_2}(\R)$ and $\GL_{d_1-d_2}(\R)$ with $d_2 < d_1$ which by Proposition \ref{lem:kifer} characterizes $\lambda_+(A^{\cV},p)$. Continuing inductively we see that this process should end (we are building a finite sequence of cocycles taking values in $\GL_{d_i}(\R)$ with decreasing $d_i$) find a irreducible pair where we can apply \cite[Theorem IV.2.2]{Kibook86} and finish the prove.
\end{proof}

\section{Hyperbolicity estimates}

In this section we extract finner properties of the fiber action $A = (A_1,\dots, A_n)$ and the Lyapunov exponent $\lambda_+(A,p)$ assuming irreducibility condition on the pair $(A,p)$. We start introducing the metric on the projective space and the quantity that measures, in average, the rate of contraction of the fiber action.\\

Consider the projective distance $d : \PP^{d-1} \times \PP^{d-1} \to [0, 1]$ defined by 
\begin{equation}\label{proj:metric}
    d(u, v) := \frac{\|u \wedge v\|}{\|u\|\|v\|} = |\sin\measuredangle(u, v)|.
\end{equation}

Set
\begin{align*}
    K_n(\alpha, p) = \sup_{\substack{u,u'\in \bP^{d-1}\\ u\neq u'}}\Int_{\Sigma\times\T^m}\left(
    \frac{d(A^n_x(t)u, A^n_x(t)u')}{d(u,u')}
    \right)^{\alpha} d\mu_p(x,t).
\end{align*}
The idea is to guarantee that under irreducibility assumption on the the pair $(A,p)$ the sequence $K_n(\alpha, p)$ decreases exponentially. This is the core feature in the proof of Theorem \ref{thm:Main} and it is precisely stated in the next proposition.\\

\begin{proposition}\label{27621.2}
Assume that $(A,p)$ is irreducible and that $\lambda_+(p)$ is simple, then there exist $\alpha_0\in (0,1)$, $\zeta>0$, $C_0>0$ and $n_0\in \N$ such that for every $n\geq n_0$ and for every $\alpha \in (0,\alpha_0)$ we have
\[
K_n(\alpha, p) \leq C_0 e^{-\zeta n}.
\]
\end{proposition}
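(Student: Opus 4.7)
The plan is to follow the Le Page-style strategy for hyperbolicity estimates, adapted to the random quasi-periodic setting. The simplicity gap $\lambda_+(p) - \lambda_2(p) > 0$ will be transferred through exterior powers, and irreducibility will be used to upgrade pointwise Furstenberg-Kifer convergence into uniform large deviation bounds.

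The starting point is an elementary pointwise inequality: if $\hat u, \hat u' \in \mathbb{S}^{d-1}$ are unit representatives of $u, u' \in \mathbb{P}^{d-1}$, then
\begin{equation*}
\frac{d(A_x^n(t) u, A_x^n(t) u')}{d(u, u')} \leq \frac{\|\wedge_2 A_x^n(t)\|}{\|A_x^n(t) \hat u\|\,\|A_x^n(t) \hat u'\|}.
\end{equation*}
Raising to power $\alpha$ and applying H\"older's inequality with exponents $(3,3,3)$ bounds $K_n(\alpha, p)$ by
\begin{equation*}
\Bigl(\int \|\wedge_2 A_x^n(t)\|^{3\alpha}\, d\mu_p\Bigr)^{1/3} \cdot \Bigl(\sup_{\hat u \in \mathbb{S}^{d-1}}\int \|A_x^n(t) \hat u\|^{-3\alpha}\, d\mu_p\Bigr)^{2/3}.
\end{equation*}

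For the first factor, since $\log \|A_i^{\pm 1}\|$ is uniformly bounded on the compact torus, all exponential moments are finite, and a standard convexity argument based on Kingman's theorem yields that $\beta \mapsto \lim_n n^{-1}\log\int \|\wedge_2 A_x^n(t)\|^\beta\, d\mu_p$ is convex, vanishes at $\beta = 0$, and has right-derivative $\lambda_1(A,p) + \lambda_2(A,p)$ there. The second factor requires showing that, uniformly in $\hat u \in \mathbb{S}^{d-1}$,
\begin{equation*}
\int \|A_x^n(t) \hat u\|^{-\beta}\, d\mu_p \leq C e^{-n\beta \lambda_+(p) + n \epsilon(\beta)},\qquad \epsilon(\beta) \to 0 \text{ as } \beta \to 0^+.
\end{equation*}
Combining the two moment estimates, the total exponential rate in $K_n(\alpha,p)$ becomes $\alpha[\lambda_1(p) + \lambda_2(p) - 2\lambda_+(p)] + o(\alpha) = -\alpha[\lambda_+(p) - \lambda_2(p)] + o(\alpha)$, which is strictly negative by the simplicity assumption when $\alpha$ is small enough, giving the required exponential decay.

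The hard part will be the uniform-in-$\hat u$ negative moment bound. Furstenberg-Kifer theory under irreducibility gives $n^{-1}\log\|A_x^n(t) \hat u\| \to \lambda_+(p)$ in $\mu_p$-probability for each fixed $\hat u$, but not uniformly. I would upgrade this to a uniform large deviation estimate
\begin{equation*}
\sup_{\hat u \in \mathbb{S}^{d-1}} \mu_p\bigl\{ (x,t) : \|A_x^n(t) \hat u\| \leq e^{n(\lambda_+(p) - \delta)} \bigr\} \leq C e^{-c(\delta) n},
\end{equation*}
via a compactness argument combined with quantitative non-concentration of the unique stationary measure on $\mathbb{T}^m \times \mathbb{P}^{d-1}$: irreducibility precludes mass on any measurable invariant subbundle, so the measure of the set where $A_x^n(t) \hat u$ remains close to a slow-growing direction decays exponentially, uniformly in $\hat u$. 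A Chernoff-type argument combined with finiteness of exponential moments then converts this uniform large deviation bound into the required negative moment control, closing the proof.
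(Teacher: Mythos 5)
Your opening move is the same as the paper's: the pointwise bound
\begin{equation*}
\frac{d(A_x^n(t) u, A_x^n(t) u')}{d(u,u')} \leq \frac{\|\wedge_2 A_x^n(t)\|}{\|A_x^n(t)\hat u\|\,\|A_x^n(t)\hat u'\|}
\end{equation*}
and the idea that the simplicity gap $\lambda_+(p)-\lambda_2(p)>0$ will eventually furnish a strictly negative exponential rate for small $\alpha$. After that, however, your strategy diverges from the paper's. You go the Le Page route: split the $\alpha$-moment via H\"older into positive moments of $\|\wedge_2 A^n\|$ and negative moments of $\|A^n\hat u\|$, analyze each through a moment-generating (pressure) function, and feed the negative-moment factor through a uniform large deviation estimate. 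The paper instead exploits the sub-multiplicativity of $K_n(\alpha,p)$ (Proposition~\ref{310821.4}): it suffices to show $K_{n_0}(\alpha,p)<1$ for a single $n_0$ and $\alpha$ small, which it does by combining the elementary inequality $e^s \leq 1 + s + \frac12 s^2 e^{|s|}$ with the fact that the integral of $\log\psi_{n_0}$ is at most $-1$ uniformly in $(v_1,v_2)$. That last fact (Proposition~\ref{310821.7}) needs only uniform-in-$v$ convergence of the \emph{averages} $\frac1n\int\log\|A^n_x(t)v\|\,d\mu_p \to \lambda_+(p)$, which Proposition~\ref{27621.1} derives from Kifer's non-random filtration theorem under irreducibility. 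No large deviation estimate and no control of tails is required.

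This difference is not cosmetic: there is a genuine gap in your plan at exactly the step you flag as the hard part. You need the uniform negative-moment bound $\sup_{\hat u}\int\|A_x^n(t)\hat u\|^{-\beta}d\mu_p \leq C e^{-n\beta\lambda_+(p) + n\epsilon(\beta)}$, which is a uniform large deviation estimate, and you only sketch how to obtain it. The sketch has two problems. First, you invoke ``the unique stationary measure on $\T^m\times\bP^{d-1}$,'' but irreducibility of $(A,p)$ does not by itself give uniqueness of the $F_A$-stationary measure projecting to $\leb$ on $\T^m$ (that typically needs a proximality/contraction hypothesis, which here would have to be extracted from simplicity of $\lambda_+$ --- a nontrivial lemma you have not stated). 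Second, the passage from ``no measurable invariant subbundle'' to ``exponentially small, uniformly in $\hat u$, mass on the slow-growth set'' is precisely the quantitative non-concentration estimate that makes Le Page's theorem hard; asserting it via a compactness argument glosses over the real work. The paper's route bypasses all of this: sub-multiplicativity of $K_n$ reduces everything to a fixed finite horizon $n_0$, where the crude a priori bound $|\log\psi_n|\leq n\log M$ (available simply because each $A_i$ and $A_i^{-1}$ is bounded on the compact torus) controls the quadratic error term, and the only input from ergodic theory is uniform convergence of integrals, not tails. If you want to complete your version you would have to actually prove the uniform large deviation bound --- which is a substantially harder theorem than the proposition itself --- or else switch to the sub-multiplicativity argument.
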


We start our analysis by proving some properties of the sequence $(K_n(\alpha, p))_n$.
\begin{proposition}\label{310821.4}
$(K_n(\alpha,p))_{n\in\N}$ is sub-multiplicative, i.e., for every $n,l\in \N$ we have that
\[
    K_{n+l}(\alpha,p) \leq K_n(\alpha,p)K_l(\alpha,p).
\]
\end{proposition}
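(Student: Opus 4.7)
The plan is to apply the cocycle identity $A^{n+l}_x(t) = A^l_{\sigma^n(x)}(f^n_x(t)) \cdot A^n_x(t)$ to factor the integrand defining $K_{n+l}(\alpha,p)$ as a product. Setting $v = A^n_x(t)u$ and $v' = A^n_x(t)u'$, this yields
\[
\left(\frac{d(A^{n+l}_x(t)u,A^{n+l}_x(t)u')}{d(u,u')}\right)^\alpha = \left(\frac{d(A^l_{\sigma^n(x)}(f^n_x(t))v,\,A^l_{\sigma^n(x)}(f^n_x(t))v')}{d(v,v')}\right)^\alpha \left(\frac{d(A^n_x(t)u,A^n_x(t)u')}{d(u,u')}\right)^\alpha.
\]

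Writing $\omega = (x_0,\dots,x_{n-1})$ and exploiting the independence of $\omega$ and $\sigma^n(x)$ under $p^{\mathbb N}$, I would condition on $(\omega,t)$: the second factor is measurable with respect to this conditioning, while the first factor, viewed as a function of $y := \sigma^n(x)$, integrates to
\[
\phi_l(v,v',s) := \int_\Sigma \left(\frac{d(A^l_y(s)v,A^l_y(s)v')}{d(v,v')}\right)^\alpha dp^{\mathbb N}(y),
\]
evaluated at $(v,v',s) = (A^n_x(t)u,\, A^n_x(t)u',\, f^n_x(t))$. Crucially, by the definition of $K_l(\alpha,p)$, for every fixed pair $v \neq v'$ one has $\int_{\T^m}\phi_l(v,v',s)\,d\leb(s) \leq K_l(\alpha,p)$.

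The remaining step is to use that, for each $\omega$, the map $t\mapsto f^n_x(t) = t + \sum_{i=0}^{n-1}\theta_{x_i}$ is a translation of $\T^m$ preserving Lebesgue measure. Performing the change of variable $s = f^n_x(t)$ inside the $\leb$-integral over $t$, interchanging the order of integration via Fubini, and pairing the resulting supremum in $(v,v')$ with the definition of $K_l$ while the remaining integrand is exactly the $K_n$-integrand against $p^n\otimes\leb$, produces the desired estimate $K_{n+l}(\alpha,p)\leq K_n(\alpha,p)K_l(\alpha,p)$ after taking the supremum over $u\neq u'$ at the end.

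The main obstacle is the coupling between the new base point $s$ and the pair $(v,v')$: both arise from the single variable $(\omega,t)$, and keeping them correctly aligned while replacing the integral over $t$ by one over $s$ is where the translation invariance of $\leb$, together with the precise way Fubini is invoked, becomes essential in order to match the bound $K_n K_l$ exactly rather than a looser quantity.
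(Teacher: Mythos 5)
Your outline follows the same strategy as the paper's proof: apply the cocycle identity, condition on the first block of symbols together with $t$, and try to bound the conditional expectation by $K_l$ so the remaining integral yields $K_n$. Your preparatory observations are correct, in particular that $\int_{\T^m}\phi_l(v,v',s)\,d\leb(s)\le K_l(\alpha,p)$ for any \emph{fixed} pair $v\neq v'$. But the decisive step --- which you yourself flag as ``the main obstacle'' --- is never actually carried out, and the claim that translation invariance of $\leb$ together with Fubini resolves it is not substantiated. After conditioning, the inner factor is $\phi_l$ evaluated at the \emph{correlated} triple $(A^n_\omega(t)u,\,A^n_\omega(t)u',\,f^n_\omega(t))$: the base point $s=f^n_\omega(t)$ and the directions $(v,v')=(A^n_\omega(t)u,A^n_\omega(t)u')$ are functions of the same $(\omega,t)$. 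The substitution $s=t+c_\omega$ only relabels $t$; the vectors become $A^n_\omega(s-c_\omega)u$ and still vary with $s$, so one is not integrating $\phi_l(v,v',\cdot)$ over $\T^m$ with $(v,v')$ frozen, and the bound $\le K_l$ cannot be invoked. No Fubini rearrangement severs this coupling, and the $\leb$-invariance of $t\mapsto s$ does not help precisely because it is needed with the pair held fixed. This is a genuine gap at the central step of the proposal.

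It is worth noting that the paper's written proof is no more explicit at this same juncture. Its first inequality moves a supremum over a constant pair $(\hat v_1,\hat v_2)$ outside the integral, while the integrand actually uses $\hat v_i=A^l_x(t)v_i$ depending on $(x,t)$; but $\int F(X,\hat v(X))\,d\mu\le\sup_{\hat v}\int F(X,\hat v)\,d\mu$ is false in general (the valid inequality goes through $\int\sup_{\hat v}F$, which is the wrong direction). Its second inequality then factors $\int F_1F_2\,d\mu_p$ as $\int F_1\,d\mu_p\cdot\int F_2\,d\mu_p$ even though both factors depend on $t$ (one through $f^l_x(t)$, the other through $t$ itself) and are not independent under $\mu_p$. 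So your instinct to locate the difficulty at the coupling is exactly right, and your honesty about it is commendable; but the proposal, like the paper's argument as written, does not contain the step that actually resolves it, and as such is incomplete.
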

\begin{proof}
Note that for every $v_1,v_2\in \bP^{m-1}$, $v_1\neq v_2$ 
    \begin{align*}
        &\frac{
            d(A^{n+l}_x(t)\cdot v_1, A^{n+l}_x(t)\cdot v_2)
        }{
            d(v_1,v_2)
        } =\\
        &\frac{
            d(A^n_{\sigma^l(x)}(f^l_x(t))\cdot A^l(t)\cdot v_1, A^n_{\sigma^l(x)}(f^l_x(t))\cdot A^l(t)\cdot v_2)
        }{
            d(A^l_{x}(t)\cdot v_1, A^l_{x}(t)\cdot v_2)
        }\cdot \frac{
            d(A^l_{x}(t)\cdot v_1, A^l_{x}(t)\cdot v_2)
        }{
            d(v_1,v_2)
        }.
    \end{align*}

So, integrating with respect to $(x,t)\in \Sigma\times\T^m$ and taking the supremum we have that
    \begin{align*}
        &\Int_{\Sigma\times\T^m}\left(\frac{
            d(A^{n+l}_x(t)\cdot v_1, A^{n+l}_x(t)\cdot v_2)
        }{
            d(v_1,v_2)
        }\right)^{\alpha}\ d\mu_p(x,t) \\
        &\leq\sup_{\hat{v_1},\hat{v_2}}\Int_{\Sigma\times\T^m}\left(
            \frac{
            d(A^n_{\sigma^l(x)}(f^l_x(t))\hat{v_1}, A^n_{\sigma^l(x)}(f^l_x(t))\hat{v_2})
        }{
            d(\hat{v_1}, \hat{v_2})
        }
        \right)^\alpha\cdot\left(
            \frac{
            d(A^l_{x}(t)\cdot v_1, A^l_{x}(t)\cdot v_2)
        }{
            d(v_1,v_2)
        }
        \right)^{\alpha}d\mu_p(x,t)\\
        &\leq \sup_{\hat{v_1},\hat{v_2}}\Int_{\Sigma\times\T^m}\left(
            \frac{
            d(A^n_{x}(t)\hat{v_1}, A^n_{x}(t)\hat{v_2})
        }{
            d(\hat{v_1}, \hat{v_2})
        }
        \right)^\alpha\ d\mu_p(x,t)\cdot \Int_{\Sigma\times\T^m}\left(
            \frac{
            d(A^l_{x}(t)\cdot v_1, A^l_{x}(t)\cdot v_2)
        }{
            d(v_1,v_2)
        }
        \right)^\alpha\ d\mu_p(x,t)\\
        &\leq K_n(\alpha,p)\cdot K_l(\alpha,p),
    \end{align*}
Taking the supremum over $v_1,v_2$ the result follows.
\end{proof}

\begin{proposition}\label{27621.1}
If $(A,p)$ is irreducible and $\lambda_+(p)$ is simple, then
\begin{align*}
    \Lim_{n\to\infty}\frac 1n \Int_{\Sigma\times \T^m}\log\norm{A^n_x(t)\cdot v}\ d\mu_p(x,t) = \lambda_+(p),
\end{align*}
uniformly on $v\in \bP^{d-1}$.
\end{proposition}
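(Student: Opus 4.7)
The plan is to reinterpret the quantity as a Birkhoff average on the projective bundle, exploit weak-$*$ compactness of empirical measures, and identify each limit value via the Furstenberg--Kifer integral formula, using irreducibility and simplicity of $\lambda_+(p)$ to exclude ``wrong'' invariant measures.

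Set $\hat X = \Sigma\times \T^m\times \bP^{d-1}$ with the projectivized action $F_A$, and let $\psi(x,t,[v]) := \log (\|A_{x_0}(t)\cdot v\|/\|v\|)$, a continuous bounded observable on the compact space $\hat X$. The multiplicative cocycle property gives the telescoping identity $\log \|A^n_x(t)\cdot v\| - \log\|v\| = \sum_{k=0}^{n-1}\psi(F_A^k(x,t,[v]))$, so that
\[
\frac{1}{n}\int_{\Sigma\times \T^m}\log\|A^n_x(t)\cdot v\|\, d\mu_p(x,t) \;=\; \int_{\hat X} \psi\, d\nu_{n,[v]},
\]
where $\nu_{n,[v]} := \frac{1}{n}\sum_{k=0}^{n-1} (F_A^k)_*(\mu_p\otimes\delta_{[v]})$. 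By compactness of $\hat X$ the family $\{\nu_{n,[v]}\}_{n,[v]}$ is weakly pre-compact, and every weak-$*$ limit $\nu_*$ is $F_A$-invariant and projects to $\mu_p$. A standard contradiction argument (if uniform convergence failed, pick $[v_k]\to [v_\infty]$ and $n_k\to\infty$ with $|\int\psi\,d\nu_{n_k,[v_k]}-\lambda_+(p)|\geq \epsilon$, extract a weak-$*$ limit $\nu_*$, and use continuity of $\psi$) reduces the proposition to the following key claim: every such limit $\nu_*$ satisfies $\int\psi\,d\nu_* = \lambda_+(p)$.

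To establish the key claim I would invoke the Furstenberg--Kifer integral formula (see \cite[Chapter III]{Kibook86}): the value $\int \psi\, d\nu_*$ is a convex combination of $\lambda_1(p)>\dots>\lambda_\kappa(p)$, weighted by the mass that the disintegration of $\nu_*$ along $\Sigma\times \T^m$ puts on each Oseledets subbundle. Hence $\int\psi\,d\nu_*\leq \lambda_+(p)$, and the main obstacle is to exclude strict inequality, i.e.\ to rule out that $\nu_*$ concentrates on the ``slow'' Oseledets subbundle $E^2(x,t)$, which is a hyperplane bundle by simplicity of $\lambda_+(p)$. To do this, $F_A$-invariance together with the product structure of $\mu_p = p^{\N}\times \leb$ and positivity $p_i>0$ force any such measurable invariant subbundle to be individually invariant under each $A_j$; a measurable selection followed by averaging out the $x$-variable, combined with continuity of the $A_j$ on $\T^m$, then yields a measurable section $\cV\colon\T^m\to \Gr(k,d)$ with $0<k<d$ that is $(A,p)$-invariant in the sense of Section~\ref{sec:inv}, contradicting irreducibility and completing the proof.

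The main technical obstacle is precisely this last step: converting almost-everywhere $F_A$-invariance of a subbundle that a priori depends on both $x$ and $t$ into a genuine section depending only on $t$. The positivity of every $p_i$ and the product structure of $\mu_p$ are exactly what allow the random variable $x$ to be averaged out and the stochastic invariance to be upgraded to the deterministic notion of $(A,p)$-invariant section required by the definition of irreducibility used in this paper.
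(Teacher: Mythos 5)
Your overall strategy---empirical measures on the projective bundle $\hat X=\Sigma\times\T^m\times\bP^{d-1}$, weak-$*$ compactness, and identification of limit integrals---is a valid and in some ways cleaner route than the paper's. The paper instead proves pointwise convergence directly from Kifer's non-random filtration theorem (\cite[Theorem III.1.2]{Kibook86}), applies dominated convergence, and then establishes uniformity by a concrete singular-value argument: for $v_n\to v$, it shows that accumulation points of the top singular direction of $A^n_x(t)$ are not orthogonal to $v$ (because $v\notin E^2_x(t)$ $\mu_p$-a.e.\ by irreducibility and simplicity), hence $\liminf\|A^n_x(t)v_n\|/\|A^n_x(t)\|>0$. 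Your compactness argument avoids this hands-on estimate. However, your formulation of the key claim is imprecise in a way you should correct: the Furstenberg--Kifer integral formula does \emph{not} express $\int\psi\,d\nu_*$ as a convex combination of the $\lambda_j(p)$ weighted by mass on the \emph{Oseledets} subbundles $E^j_x(t)$. In the one-sided setting the $E^j_x(t)$ depend on the entire future of $x$ and are not directly accessible to stationary measures of the form $p^{\N}\times\hat m$. The correct statement, which is exactly Kifer's Theorem III.1.2, is that $\int\psi\,d\nu_*$ is a convex combination of the \emph{Furstenberg--Kifer exponents} $\beta_i(p)$, and the relevant invariant subbundles are the \emph{non-random} ones $\cV^i(t)$, which already depend only on $t$. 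This dissolves what you describe as ``the main technical obstacle'': there is no random-to-non-random conversion to perform, and indeed your suggested fix (``averaging out the $x$-variable'' for a Grassmannian-valued object) does not literally make sense. If $\int\psi\,d\nu_*<\lambda_+(p)=\beta_0(p)$ then some $\beta_i$ with $i\geq 1$ exists, so $\cV^1$ is a nontrivial $(A,p)$-invariant section, contradicting irreducibility. With that adjustment your argument closes, and it is instructive to note that both your route and the paper's ultimately rest on the very same Theorem~III.1.2 of Kifer; you invoke it through its consequence for stationary measures, while the paper invokes it through its consequence for pointwise limits and then does the uniformity estimate by hand.
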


The proposition \ref{27621.1} is an important consequence of the non-random filtration theory developed by Kifer in \cite{Kibook86}. In order to precisely apply non-random filtration theorem (see \cite[Theorem III.1.2]{Kibook86}) we introduce the notion of stationary measure for the base dynamics. We say that a measure $\eta$ on $\T^m$ is \emph{stationary} for $f$ if it satisfies the following equation
\begin{align*}
    \eta = \Sum_{i=1}^N \eta\circ f_i^{-1}.
\end{align*}

The next lemma is a direct consequence of the fact that the maps $f_i$ are torus translation with $\theta_i$ rationally independent for some $i$. \\

\begin{lemma}
    The Lebesgue measure is the unique stationary measure for $f$.
\end{lemma}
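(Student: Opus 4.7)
The plan is to carry this out by harmonic analysis on the torus, exploiting that each $f_i$ is a translation. Lebesgue measure is trivially stationary: since $f_i(t) = t + \theta_i$ preserves $\leb$ for every $i$, one gets $\leb = \sum_i p_i \, \leb \circ f_i^{-1}$ from $\sum_i p_i = 1$ (reading the stationarity equation with its intended $p_i$ weights).

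For uniqueness, let $\eta$ be any stationary probability measure on $\T^m$ and compute its Fourier coefficients $\hat{\eta}(k) = \int_{\T^m} e^{-2\pi i \langle k, t\rangle}\, d\eta(t)$. Since translation by $\theta_i$ multiplies the $k$-th Fourier coefficient by $e^{-2\pi i \langle k, \theta_i\rangle}$, the stationarity relation transforms into
\[
\hat{\eta}(k)\left(1 - \sum_{i=1}^N p_i e^{-2\pi i \langle k, \theta_i\rangle}\right) = 0
\]
for every $k \in \Z^m$.

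The key step is to show that for $k \neq 0$ the second factor does not vanish. The quantity $\sum_i p_i e^{-2\pi i \langle k, \theta_i\rangle}$ is a strict convex combination (because all $p_i > 0$) of points on the unit circle, and therefore equals $1$ only if each $e^{-2\pi i \langle k, \theta_i\rangle} = 1$. In particular this would force $\langle k, \theta_j\rangle \in \Z$ for the index $j$ whose translation vector is rationally independent, and by the very definition of rational independence of $1, \theta_j^{(1)}, \ldots, \theta_j^{(m)}$ this is only possible when $k = 0$.

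Consequently $\hat{\eta}(k) = 0$ for every $k \neq 0$, while $\hat{\eta}(0) = 1$ from $\eta(\T^m) = 1$. Uniqueness of Fourier coefficients on $\T^m$ then yields $\eta = \leb$. No substantial obstacle is expected: the positivity assumption $p_i > 0$ is precisely what makes the convex combination argument strict, and the remainder of the proof is a standard computation parallel to the usual proof that an irrational rotation is uniquely ergodic. If one wanted to dispense with the positivity assumption, one would instead need to argue that the subgroup of $\T^m$ generated by $\{\theta_i : p_i > 0\}$ is still dense, which is not needed here.
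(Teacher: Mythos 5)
The paper offers no proof of this lemma, asserting only that it is a ``direct consequence'' of the rational independence of $\theta_j$ for some $j$; your Fourier-analytic argument is correct and is the standard route, paralleling the classical proof of unique ergodicity for an irrational torus rotation. You are also right to read the paper's stationarity equation as carrying the weights $p_i$: as literally written, $\eta = \sum_i \eta\circ f_i^{-1}$, Lebesgue measure itself would fail to be stationary for $N>1$, so the omission of the $p_i$ is plainly a typo. One small sharpening: your convex-combination step only needs $p_j>0$ for a single index $j$ whose $\theta_j$ is rationally independent (equality $\sum_i p_i\cos(2\pi\langle k,\theta_i\rangle)=1$ forces $\cos(2\pi\langle k,\theta_i\rangle)=1$ precisely for those $i$ with $p_i>0$), not strict positivity of every $p_i$; this is irrelevant for the paper, which assumes $p_i>0$ for all $i$, but worth noting.
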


\begin{proof}[Proof of Proposition \ref{27621.1}]
By \cite[Theorem III.1.2]{Kibook86} there exists a full Lebesgue measure $X\subset T^m$ such that for any $t\in X$ there exist a sequence of  (non-random) linear subspaces 
    \[
        \{0\} =\cV^{k(p)+1}(t)\subset \cV^{k(p)}(t) \subset \cdots \subset \cV^1(t)\subset \cV(t)^0 = \RR^d,
    \]
    and a sequence of (non-random) values 
    \[
        -\infty < \beta_{k(p)}(p) < \cdots < \beta_1(p) < \beta_0(p)<\infty,
    \]
    with $\beta_0(p) = \lambda_+(p)$, such that for any $v \in \cV^i(t)$ 
    \[
        \lim_{n \to \infty} \frac 1n \log \|A^n_x(t).v\| = \beta_i(p),
    \]
    for each  $i= 0, \hdots k(p)$ and for $p^\NN$-almost every $x \in \Sigma$. Moreover, we have 
    \[
        A_j(t)\cV^i(t) = \cV^i(f_j(t)) \text{ for all} j= 1, \hdots N.
    \]
    
    Since we are assuming that the cocycle is irreducible, such filtration must be trivial. Therefore,
    \[
        \lim_{n \to \infty} \frac 1n \log \|A^n_x(t).v\| = \ld_+(p),
    \]
    for $p^{\N}$-a.e $x\in \Sigma$ and for every $v\in \bP^{d-1}$. By the dominated convergence theorem this implies that
    \[
        \lim_{n \to \infty} \frac 1n \int_{\Sigma \times \T^m} \log \|A_x^n(t).v\|d\mu_p(x,t) =\ld_+(p).
    \]
    
Now it remains to show that the convergence is uniform on $v \in \PP^{d-1}$. Assume that this convergence is not uniform. This implies the existence of a positive number $\epsilon > 0$ and a sequence of unit elements $v_n \in \PP^{d-1}$ such that for all large $n$,
    \begin{align}\label{310821.1}
        \frac 1n \int_{\Sigma \times \T^m}\log\|A_x^n(t).v_n \| d \mu_p (x,t) \leq \ld_+(p) - \epsilon.
    \end{align}
By compacity of $\PP^{d-1}$, we may assume that the sequence $v_n$ converges to some unit element $v \in \PP^{d-1}$. \\
    
For each $(x,t)\in \Sigma\times\T^m$ let $u_{n,i}(x,t)$ and $a_{n,i}(x,t)$ be respectively the $i$-th singular direction and the $i$-th singular value (in non-increasing order) of the matrix $A^n_x(t)$. For $\mu_p$-a.e. $(x,t)\in \Sigma\times\T^m$ we have that the accumulation points of the sequence $(u_{n,i}(x,t))_{n\in\N}$ generates the Oseledets space $E^2_x(t)$ for every $i\geq 2$. In particular, the accumulation points of $(u_{n,i})_{n\in \N}$ are orthogonal to $E^2_x(t)$ for $\mu_p$-a.e. $(x,t)\in \Sigma\times\T^m$. Also, note that
        \begin{align*}
            \mu_p\left(\left\{
                    (x,t)\in \Sigma\times\T^m;\ v\in E^2_x(t)            
                \right\}
            \right) = 0,
        \end{align*}
        which follows from the fact that for $\mu_p$-a.e. $(x,t)\in \Sigma\times\T^m$
        \begin{align*}
            \lim_{n\to\infty}\frac{1}{n}\log\norm{A^n_x(t)} = \lambda_+(p).
        \end{align*}
        
Joining this information and the fact that the multiplicity of $\lambda_1(p) = \lambda_+(p)$ is equal to $1$, we see that if $\hat{u} \in \SS^{d-1}$ is an accumulation point of $(u_{n,1})_n$ then 
        \begin{align}\label{310821.2}
            |<v,\hat{u}>| > 0.
        \end{align}
Write
        \begin{align*}
            v_n = \Sum_{i=1}^d <v_n, u_{n,i}(x,t)>\cdot u_{n,i}(x,t).
        \end{align*}

Applying $A^n_x(t)$ in both sides and using that $\{u_{n,i}\}_{i}$ are the singular directions of $A^n_x(t)$ we have that
        \begin{align*}
            \norm{A^n_x(t)\cdot v_n} \geq \norm{A^n_x(t)}\cdot |<v_n,u_{n,i}(x,t)>|.
        \end{align*}
        Hence, using \eqref{310821.2} we have that
        \begin{align}\label{310821.3}
            \liminf_{n\to\infty}\frac{
                \norm{A^n_x(t)\cdot v_n}
            }{
                \norm{A^n_x(t)}
            }> 0,
        \end{align}
        for $\mu_p$-a.e. $(x,t)\in \Sigma\times\T^m$.\\
        
Now, to conclude, observe that
        \begin{align*}
            &\frac{1}{n}\Int_{\Sigma\times\T^m}\log\norm{A^n_x(t)\cdot v_n}\ d\mu_p(x,t) \\
            &=\frac{1}{n}\Int_{\Sigma\times\T^m}\log\frac{
                \norm{A^n_x(t)\cdot v_n}
            }{
                \norm{A^n_x(t)}
            }\ d\mu_p(x,t) + \frac{1}{n}\Int_{\Sigma\times\T^m}\log\norm{A^n_x(t)}\ d\mu_p(x,t).
        \end{align*}
        
Making $n\to\infty$, using equation \eqref{310821.3} and the dominated convergence theorem we have that
        \begin{align*}
            \Lim_{n\to\infty}\frac{1}{n}\Int_{\Sigma\times\T^m}\log\norm{A^n_x(t)\cdot v_n}\ d\mu_p(x,t) = \lambda_+(p).
        \end{align*}
This contradicts the equation \eqref{310821.1} and concludes the proof of the uniform convergence.
        
\end{proof}

\begin{proposition}\label{310821.7}
There exists $n_0\in \N$ such that for every $n\geq n_0$ and for every $v_1,v_2\in \bP^{d-1}$, with $v_1\neq v_2$ we have
        \begin{align*}
            \Int_{\Sigma\times\T^m}\log
                \frac{
                    d(A^n_x(t)\cdot v_1, A^n_x(t)\cdot v_2)
                }{
                    d(v_1,v_2)
                }\ d\mu_p(x,t) < -1.
        \end{align*}
\end{proposition}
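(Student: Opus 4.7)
The plan is to express the integrand in terms of norms and a wedge-product norm so that Kingman's subadditive theorem and the preceding Proposition~\ref{27621.1} yield the desired decay. Concretely, for unit vector representatives $\tilde v_1, \tilde v_2$ of $v_1, v_2 \in \mathbb{P}^{d-1}$, the projective metric \eqref{proj:metric} gives
\[
\log \frac{d(A^n_x(t)\cdot v_1, A^n_x(t)\cdot v_2)}{d(v_1, v_2)}
= \log \frac{\|\wedge_2 A^n_x(t)(\tilde v_1 \wedge \tilde v_2)\|}{\|\tilde v_1 \wedge \tilde v_2\|}
- \log \|A^n_x(t)\cdot \tilde v_1\|
- \log \|A^n_x(t)\cdot \tilde v_2\|,
\]
so the integral splits into three pieces, each of which admits a uniform asymptotic control.

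For the two singular terms $\int \log \|A^n_x(t)\cdot \tilde v_i\|\, d\mu_p$, Proposition~\ref{27621.1} gives
\[
\frac{1}{n}\int_{\Sigma\times\T^m} \log \|A^n_x(t)\cdot v\|\, d\mu_p(x,t) \;\longrightarrow\; \lambda_+(p) = \lambda_1(p),
\]
uniformly in $v \in \mathbb{P}^{d-1}$, which handles the last two terms. For the first term I would use the trivial upper bound $\|\wedge_2 A^n_x(t)(\tilde v_1 \wedge \tilde v_2)\| \leq \|\wedge_2 A^n_x(t)\| \cdot \|\tilde v_1 \wedge \tilde v_2\|$ together with Kingman's subadditive ergodic theorem applied to the cocycle $\wedge_2 A$, to obtain
\[
\limsup_{n\to\infty}\frac{1}{n}\int_{\Sigma\times\T^m}\log \frac{\|\wedge_2 A^n_x(t)(\tilde v_1 \wedge \tilde v_2)\|}{\|\tilde v_1 \wedge \tilde v_2\|}\, d\mu_p(x,t) \leq \lambda_+(\wedge_2 A, p) = \lambda_1(p) + \lambda_2(p),
\]
uniformly in $(v_1, v_2)$.

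Combining these three estimates yields
\[
\limsup_{n\to\infty}\frac{1}{n} \int_{\Sigma\times\T^m}\log \frac{d(A^n_x(t)\cdot v_1, A^n_x(t)\cdot v_2)}{d(v_1,v_2)}\, d\mu_p(x,t) \leq \lambda_2(p) - \lambda_1(p) < 0,
\]
where the strict negativity uses the hypothesis that $\lambda_+(p) = \lambda_1(p)$ is simple, so $\lambda_2(p) < \lambda_1(p)$, and the uniformity across all $v_1\neq v_2$ comes from the uniform upper bound above combined with the uniform convergence in Proposition~\ref{27621.1}. Choosing $n_0$ large enough that $n(\lambda_2(p)-\lambda_1(p)) + o(n) < -1$ for every $n\geq n_0$ finishes the argument.

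The only delicate point is confirming that both the upper bound for the wedge term and the convergence for the single-vector term are genuinely uniform in $v_1, v_2\in \mathbb{P}^{d-1}$; the wedge bound is automatic (it holds pointwise in $v_1, v_2$), and uniformity for the singular terms is exactly the content of Proposition~\ref{27621.1}, so no further work is needed beyond assembling these pieces.
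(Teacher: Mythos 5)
Your proposal is correct and follows essentially the same route as the paper: decompose the log-contraction rate into the $\wedge_2$-term minus the two single-vector terms, bound the wedge term by $\|\wedge_2 A^n_x(t)\|$ and control it via Kingman (giving $\lambda_1(p)+\lambda_2(p)$), and use the uniform convergence of Proposition~\ref{27621.1} for the singular terms. The only cosmetic difference is that you write the exact identity $\log d(A^n v_1, A^n v_2)/d(v_1,v_2) = \log\|\wedge_2 A^n(\tilde v_1\wedge\tilde v_2)\|/\|\tilde v_1\wedge\tilde v_2\| - \log\|A^n\tilde v_1\| - \log\|A^n\tilde v_2\|$ before bounding, whereas the paper applies the inequality $d(A^n v_1,A^n v_2)/d(v_1,v_2) \le \|\wedge_2 A^n\|/(\|A^n v_1\|\|A^n v_2\|)$ directly and then takes logarithms; both yield the same final estimate $\lambda_2(p)-\lambda_1(p)+3\varepsilon < 0$.
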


\begin{proof}
Notice that
        \begin{align*}
            \frac{
                d(A^n_x(t)\cdot v_1, A^n_x(t)\cdot v_2)
            }{
                d(v_1,v_2)
            }&= \frac{
                \norm{A^n_x(t)\cdot v_1\wedge A^n_x(t)\cdot v_2}
            }{
                \norm{v_1\wedge v_2}
            }\cdot
            \frac{
                \norm{v_1}\cdot\norm{v_2}
            }{
                \norm{A^n_x(t)\cdot v_1}\cdot\norm{A^n_x(t)\cdot v_2}
            }\\
            &\leq\frac{
                \norm{\wedge_2 A^n_x(t)}
            }{
                \norm{A^n_x(t)\cdot v_1}\cdot\norm{A^n_x(t)\cdot v_2}
            }.
        \end{align*}
Taking the logarithm in both sides, dividing by $n$ and integrating we have that
        \begin{align*}
            &\frac{1}{n}\Int_{\Sigma\times\T^m}\frac{
                d(A^n_x(t)\cdot v_1, A^n_x(t)\cdot v_2)
            }{
                d(v_1,v_2)
            }\ d\mu_p(x,t)\\
            &\leq \frac{1}{n}\Int_{\Sigma\times\T^m}\log\norm{\wedge_2 A^n_x(t)}\ d\mu_p(x,t) - \frac{1}{n}\Int_{\Sigma\times\T^m}\log\norm{A^n_x(t)\cdot v_1}\ d\mu_p(x,t) -\\ &-\frac{1}{n}\Int_{\Sigma\times\T^m}\log\norm{A^n_x(t)\cdot v_2}\ d\mu_p(x,t).
        \end{align*}
        
Therefore, using Proposition \ref{27621.1} we have that given $\varepsilon\in (0, (\lambda_1(p)-\lambda_2(p))/3)$ there exists $n_1\in \N$ (independently of $v_1$ and $v_2$) such that for every $n\geq n_1$ we have that
        \begin{align*}
            \frac{1}{n}\Int_{\Sigma\times\T^m}\frac{
                d(A^n_x(t)\cdot v_1, A^n_x(t)\cdot v_2)
            }{
                d(v_1,v_2)
            }\ d\mu_p(x,t)
            &\leq \lambda_1(p) + \lambda_2(p) + \varepsilon - (\lambda_1(p) - \varepsilon) - (\lambda_1(p) - \varepsilon)\\
            &= \lambda_2(p) - \lambda_1(p) + 3\varepsilon.
        \end{align*}
        Since $\lambda_2(p) - \lambda_1(p) - 3\varepsilon < 0$ we have that there exists $n_0\in \N$, $n_0\geq n_1$, such that
        \begin{align*}
            \Int_{\Sigma\times\T^m}\frac{
                d(A^n_x(t)\cdot v_1, A^n_x(t)\cdot v_2)
            }{
                d(v_1,v_2)
            }\ d\mu_p(x,t)
            &\leq n(\lambda_2(p) - \lambda_1(p) +3\varepsilon)\\
            & < -1,
        \end{align*}
        for every $v_1,v_2\in \bP^{d-1}$, $v_1\neq v_2$.
\end{proof}
    
Now we are finally ready to give a proof of the Proposition \ref{27621.2}.\\

\begin{proof}[Proof of Proposition \ref{27621.2}].
Observe that by sub-multiplicativity of the sequence $(K_n(\alpha, p))_n$ (see Proposition \ref{310821.4}) it is enough to guarantee that there exists some $n_0\in \N$ and $\alpha_0>0$ such that for every $\alpha\in (0,\alpha_0)$
        \begin{align}\label{310821.5}
            K_{n_0}(\alpha, p) < 1.
        \end{align}
        Since for every $n\geq n_0$ we can write $n = n_0 q + r$ with $r\in \{0,\dots, n_0-1\}$ and so
        \begin{align*}
            K_n(\alpha,p) \leq \max_{0\leq j\leq n_0-1}K_j(\alpha,p)\cdot (K_{n_0}(\alpha,p)^{1/q})^n.
        \end{align*}
        
Setting $C_0 = \max_{0\leq j\leq n_0-1}K_j(\alpha,p)$ and $\zeta = -1/q\log K_{n_0}(\alpha,p)$ the result will be concluded.\\
        
In order to guarantee \eqref{310821.5} and simplify notation, set for $v_1,v_2\in \bP^{d-1}$, $v_1\neq v_2$
        \[
            \psi_n(x,t,v_1,v_2) = \frac{
                d(A^n_x(t)\cdot v_1, A^n_x(t)\cdot v_2)
            }{
                d(v_1,v_2)
            }.
        \]
        
Using the fact that $e^t \leq 1 + t + \frac{1}{2}t^2e^{|t|}$ for every $t\in \R$, we get that
        \begin{align*}
            \Int_{\Sigma\times\T^m}&\psi_n^{\alpha}(x,t,v_1,v_2)\ d\mu_p(x,t)\\
            &= \Int_{\Sigma\times\T^m}\exp\left(
                \alpha\log\psi_n(x,t,v_1,v_2)
            \right)\ d\mu_p(x,t)\\
            &\leq 1 + \alpha\Int_{\Sigma\times\T^m}\log\psi_n(x,t,v_1,v_2)\ d\mu_p(x,t)\\ &+\frac{\alpha^2}{2}\Int_{\Sigma\times\T^m}\left(
                \log\psi_n(x,t,v_1,v_2)
            \right)^2\exp\left(\left|
                    \alpha\log\psi_n(x,t,v_1,v_2)
                \right|
            \right)\ d\mu_p(x,t).
        \end{align*}

Since the images of $\TT^m$ under the continuous maps $A_i$, $i = 1, \hdots, N$ are compact subsets of $\GL(d, \RR)$, we can find a positive constant $M$  such that for every $n\in \N$, and $\mu_p$-almost every $(x,t)\in \Sigma\times\T^m$ and every $v_1,v_2\in \bP^{d-1}$
            \begin{align}\label{310821.6}
                |\log\psi_n(x,t,v_1,v_2)| \leq n \log M.
            \end{align}

Hence, using inequality \eqref{310821.6} and taking $n_0\in \N$ given by Proposition \ref{310821.7}, we have that
        \begin{align*}
            &\Int_{\Sigma\times\T^m}\psi_n^{\alpha}(x,t,v_1,v_2)\ d\mu_p(x,t)
            \leq 1 -\alpha + \alpha^2\cdot\frac{
                n^2 (\log M)^2 M^{n}.
            }{2},
        \end{align*}
        for every $n\geq n_0$. Therefore, defining
        \begin{align*}
            \alpha_0 = \frac{2}{
                n_0^2(\log M)^2 M^{n_0}
            },
        \end{align*}
        we have that for every $\alpha\in (0,\alpha_0)$
        \begin{align*}
            K_{n_0}(\alpha,p) = \sup_{\substack{
                v_1,v_2\in \bP^{d-1}\\
                v_1\neq v_2}
            }\Int_{\Sigma\times\T^m}\psi_{n_0}^{\alpha}(x,t,v_1,v_2)\ d\mu_p(x,t) < 1.
        \end{align*}
\end{proof}

\section{Proof of the main results}

In this section we use the machinery developed in the previous section to establish the proofs of Theorem \ref{thm:Main} and Corollary \ref{310821.8}.\\
    
\paragraph{Proof of Theorem \ref{thm:Main}}
Initially we assume that the pair $(A,p)$ is irreducible. The general case will be deduced from the irreducible by an inductive argument.\\

For each $z = (z_1,\dots,z_N) \in \C^{N}$ consider the operator $T_z: C^0(\T^m\times\bP^{d-1}; \R)\rightarrow C^0(\T^m\times\bP^{d-1};\R)$ given by
    \[
        T_z\varphi(t,v) = \Sum_{i=1}^{N}z_i\varphi(f_i(t),A_i(t)\cdot v).
    \]

Note that for each $(t,v) \in \T^m\times\bP^{d-1}$, $T^n_z\varphi(t,v)$ is a homogeneous polynomial of degree $n$ in the variables $z_1,\dots,z_{N}$.\\

Let $\varphi \in C^0(\T^m\times\bP^{d-1};\R)$ be a function which is uniformly Lipschitz in each fiber $t\in \T^m$, i.e., 
\[
\sup_{t\in\T^m}Lip(\varphi(t,\cdot)) < \infty.
\]

Fix $\zeta> 0$, $n_0\in \N$ and $\alpha_0> 0$ given by Proposition \ref{27621.2}. Consider $\gamma \in (0,1)$ such that $\gamma > e^{-\zeta}$ and set
\[
D_{\gamma} = \left\{
z\in \C^{N};\ \max_i\frac{|z_i|}{p_i} < \gamma^{-1}\ \ \text{and}\ \ \Sum_{i=1}^{N}z_i = 1
\right\},
\]
which is well defined once by hypothesis we are assuming that $p_i>0$ for every $i=1,\dots, N$.\\

For each $v_1, v_2 \in \bP^{d-1}$, $v_1\neq v_2$ and $z \in D_{\gamma}$, we have
    \begin{align*}
        &\left|
            \Int_{\T^m} T_z^n\varphi(t,v_1)\ dt - \Int_{\T^m} T_z^n\varphi(t,v_2)\ dt
        \right|\\
        &\leq
        \gamma^{-n}\Int_{\Sigma\times\T^m}\left|
            \varphi(f^n_x(t), A^n_x(t)\cdot v_1) - \varphi(f^n_x(t), A^n_x(t)\cdot v_2)
        \right|\ d\mu_p(x,t)\\
        &\leq
        \gamma^{-n}\sup_{t\in\T^m} Lip(\varphi(t,\cdot))\Int_{\Sigma\times\T^m} d(A^n_x(t)\cdot v_1, A^n_x(t)\cdot v_2)\ d\mu_p(x,t).
    \end{align*}

Since, for every $\alpha> 0$,
    \begin{align*}
        d(A^n_x(t)\cdot v_1, A^n_x(t)\cdot v_2) \leq \left(
            \frac{d(A^n_x(t)\cdot v_1, A^n_x(t)\cdot v_2)}{d(v_1,v_2)}
        \right)^{\alpha},
    \end{align*}
    we have that
    \begin{align*}
        &\left|
            \Int_{\T^m} T_z^n\varphi(t,v_1)\ dt - \Int_{\T^m} T_z^n\varphi(t,v_2)\ dt
        \right|\leq
        \gamma^{-n}\sup_{t\in\T^m} Lip(\varphi(t,\cdot))K_n(\alpha,p).
    \end{align*}

Hence taking $\alpha\in (0,\alpha_0)$ and using Proposition \ref{27621.2}, we have
\begin{align}\label{24521.0}
    &\left|
    \Int_{\T^m} T_z^n\varphi(t,v_1)\ dt - \Int_{\T^m}T_z^n\varphi(t,v_2)\ dt
    \right|
    \leq
    C_0\sup_{t\in\T^m}Lip(\varphi(t,\cdot)) \gamma^{-n}e^{-\zeta n}.
\end{align}

Now, for $z \in D_{\gamma}$ and $\varphi$ as above and any $v\in \bP^{d-1}$, using \eqref{24521.0}, we have, for every $n\geq n_0$, that
\begin{align*}
    &\left|
    \Int_{\T^m}T^{n+1}_z\varphi(t,v)\ dt - \Int_{\T^m} T^n_z\varphi(t,v)\ dt
    \right|=\\
    &\hspace{3cm}=
    \left|
    \Sum_{i=1}^{N}z_i\left(
    \Int_{\T^m}T^n_z\varphi(f_i(t),A_i(t)\cdot v)\ dt - \Int_{\T^m}T^n_z\varphi(t,v)\ dt
    \right)
    \right|\\
    &\hspace{3cm}=
    \left|
    \Sum_{i=1}^{N}z_i\left(
    \Int_{\T^m}T^n_z\varphi(t,A_i(f_i^{-1}(t))\cdot v)\ dt - \Int_{\T^m}T^n_z\varphi(t,v)\ dt
    \right)
    \right|\\
    &\hspace{3cm}\leq
    \Sum_{i=1}^{N}|z_i|\left|
    \Int_{\T^m}T^n_z\varphi(t,A_i(f_i^{-1}(t))\cdot v)\ dt - \Int_{\T^m}T^n_z\varphi(t,v)\ dt
    \right|\\
    &\hspace{3cm}\leq
    N\gamma^{-1} C_0\sup_{t\T^m}Lip(\varphi(t,\cdot))\gamma^{-n}e^{-n\zeta}.
\end{align*} 

Hence, taking $m= n+\ell>n$ we have that
\begin{align*}
    &\left|
    \Int_{\T^m} T^m_z\varphi(t,v)\ dt - \Int_{\T^m} T^n_z\varphi(t,v)\ dt
    \right|\leq\\
    &\hspace{3cm}\leq
    \Sum_{j=1}^{\ell}\left|
    \Int_{\T^m} T^{n+j}_z\varphi(t,v)\ dt - \Int_{\T^m} T^{n+j-1}_z\varphi(t,v)\ dt
    \right|\\
    &\hspace{3cm}\leq
    C_0N\gamma^{-1}\sup_{t\in\T^m}Lip(\varphi(t,\cdot))\Sum_{j=1}^{\ell}(e^{-\zeta}\gamma^{-1})^{n+j-1}.
\end{align*}

Then,
\begin{align*}
    &\left|
    \Int_{\T^m} T^m_z\varphi(t,v)\ dt - \Int_{\T^m} T^n_z\varphi(t,v)\ dt
    \right|
    \leq\\
    &\hspace{3cm}\leq C_0\sup_{t\in\T^m}Lip(\varphi(t,\cdot))N\gamma^{-1}(e^{-\zeta}\gamma^{-1})^n\Sum_{j=1}^{\ell}(e^{-\zeta}\gamma^{-1})^{j-1}.
\end{align*}

Therefore, for each $v\in \bP^{d-1}$, the sequence of holomorphic functions
\[
\left\{
z\longmapsto \Int_{\T^m} T^n_z\varphi(t,v) dt;\ n\geq 0
\right\},
\]
converges uniformly in the set
\[
D_{\gamma} = \left\{
z\in \C^{N};\ \Sum_{i=1}^{N}z_i = 1, \max_{1\leq j\leq N}\frac{|z_j|}{p_j}<\gamma^{-1}
\right\}.
\]

This implies in particular that for each $v\in \bP^{d-1}$ the function
    \begin{align*}
        z\longmapsto \lim_{n\to\infty} \Int_{\T^m}T^n_z\varphi(t,v)\ dt
    \end{align*}
is holomorphic.

For each $j = 1,\dots, N$, consider the functions
    \[
        \varphi_j(t,v) = \log\frac{\norm{A_j(t)\cdot v}}{\norm{v}}.
    \]

Define, for each $n\geq 1$ and each probability vector $q\in D_{\gamma}\cap\R^N$,
    \[
        \xi_n(t,v,q) = \Sum_{j=1}^{N}q_jT^n_q\varphi_j(t,v) =
        \Int_{\Sigma}\log\frac{\norm{A_x^{n+1}(t)\cdot v}}{\norm{A_x^n(t)\cdot v}} d q^{\N}(x),
    \]
and observe that
    \[
        \frac{1}{\ell}\Sum_{n=0}^{\ell-1}\xi_n(t,v,q) = \frac{1}{\ell}\Int_{\Sigma}\log\norm{A_x^{\ell}(t)\cdot v}d q^{\N}(x) - o_{\ell}(1),
    \]
which implies that 
        \begin{align*}
            \Lim_{\ell\to\infty} \frac{1}{\ell}\Sum_{n=0}^{\ell-1}\Int_{\T^m}\xi_n(t,v,q)\ dt &= \Lim_{n\to\infty}\frac{1}{n}\Int_{\Sigma\times\T^m}\log\norm{A_x^n(t)\cdot v}\ d\mu_q(x,t)
        \end{align*}

The (top) Lyapunov exponent being continuous (proposition \ref{prop:continuity}), then $\lambda_+(q)$ is also simple. Since irreducibility is an open property, we can apply  Proposition \ref{27621.1} to $q$ to conclude that the  limit of the right hand side of the above equation must be equal to $\lambda_+(q)$.\\

Observe that the sequence
    \begin{align*}
        \left(
            \Int_{\T^m}\xi_n(t,v,q)\ dt
        \right)_n = \left(
            \Sum_{j=1}^N q_j\Int_{\T^m} T^n_q\varphi_j(t,v)\ dt
        \right)_n,
    \end{align*}
converges and its Cesàro sum
    \begin{align*}
        \left(
            \frac{1}{\ell}\Sum_{n=0}^{\ell-1}\Int_{\T^m}\xi_n(t,v,q)\ dt
        \right)_n,
    \end{align*}
converges to $\lambda_+(q)$ we conclude that for every $v\in \bP^{d-1}$
    \begin{align*}
        \Lim_{n\to\infty}\Sum_{j=1}^N q_j\Int_{\T^m} T^n_q\varphi_j(t,v)\ dt = \lambda_+(q).
    \end{align*}

But as was observed before the right hand side of the above equation has a holomorphic extension to the domain $D_{\gamma}$. This shows that the function $q\longmapsto \lambda_+(q)$ has a holomorphic extension to the $D_{\gamma}$ concluding the proof of the Theorem assuming the irreducibility of $(A,p)$.\\

Now we drop the assumption that  $(A,p)$ is irreducible. In this case, there exists a $(A,p)$-invariant section $\cV:\T^m\rightarrow \Gr(d_1,d)$, with $d_1<d$. Applying lemma \eqref{lem:kifer} we obtain that 
\[
    \ld_+(p, F) = \max\{\ld_+(A^{\cV},p), \ld_+(A^{\R^d/\cV},p))\}.
\]

Using the fact that the $\lambda_+(p)$ is simple, we conclude that 
\[
    \ld_+(A^{\cV}, p) \neq \ld_+(A^{\R^d/\cV}, p).
\]

Assume without loss of generality that $\lambda_+(A,p) = \lambda_+(A^{\cV},p) >\lambda_+(A^{\R^d/\cV},p)$. This implies that for every probability vector $q\in \R^N$ in a neighborhood of $p$ we have that
    
    \begin{align*}
        \lambda_+(A,q) = \lambda_+(A^{\cV}, q)
    \end{align*}
    (remember that $(A,p)$ and $(A,q)$ share the same invariant section which is $\cV$).\\
    
If $(A^{\cV},p)$ is irreducible, then, by the previous part, we have that the function $q\longmapsto \lambda_+(A^{\cV},q)$ has an holomorphic extension to a complex domain $D_{\gamma}$. Since $\lambda_+(A,q)$ coincides with $\lambda_+(A^{\cV},q)$ in a neighborhood of $p$ we have that in this neighborhood $q\longmapsto \lambda_+(A,q)$ has a holomorphic extension.\\
    
However, if $(A^{\cV},p)$ is reducible, then there exists a $(A^{\cV},p)$-invariant section $V_1:\T^m\rightarrow \Gr(d_2,d_1)$ with $d_2< d_1$ (if $\lambda_+(A,p) = \lambda_+(A^{\R^d/\cV},p)$ above then we change $d_1$ for $d-d_1$) and so we can switch the roles of $A$ and $A^{\cV}$ to conclude that there exists a neighborhood of $p$ such $q\longmapsto\lambda_+(A,q)$ has an holomorphic extension.\\
    
The analysis is concluded observing that the process must finish once we have that the invariant section in each process are taking values in subspaces whose dimension are decreasing ($d>d_1 > d_2> \dots$). This finishes the proof of the Theorem \ref{thm:Main}.
    
\paragraph{Proof of Corollary \ref{310821.8}:} 
As described in the end of Section \ref{s:notations} we see for any $1\leq k\leq d$
    \begin{align*}
        \lambda_+(\wedge_kA, p) = \lambda_1(p)+\dots+\lambda_k(p).
    \end{align*}
The fact that the Lyapunov spectrum of $p$ is simple guarantees that the top Lyapunov exponent $\lambda_+(\wedge_k A,p)$ is simple. Then, for each $k\in\{1,\dots,d\}$ we can apply Theorem \ref{thm:Main} obtaining that the map
    \begin{align*}
        q\in\R^N\longmapsto \lambda_+(\wedge_kA,q),
    \end{align*}
where $q$ is taken among the probability vectors, has a holomorphic extension to some complex domain around $p\in \R^N$. Therefore, the maps
    \begin{align*}
        q\longmapsto \lambda_k(q) = \lambda_+(\wedge_kA,q) - \lambda_+(\wedge_{k-1}A,q),
    \end{align*}
have a holomorphic extension to a complex domain around $p\in \R^N$ concluding the proof of the corollary.

\section*{Acknowledgments}
The authors thank Ali Tahzibi for suggesting this problem. This work was partially supported by Universidad de Costa Rica for A.S. and by FAPESP grant \#18/07797-5 for Y.T. J. B. was supported by FCIENCIAS.

\printbibliography

\end{document}